\newtheorem*{rep@theorem}{\rep@title}
\newcommand{\newreptheorem}[2]{
\newenvironment{rep#1}[1]{
 \def\rep@title{#2 \ref{##1}}
 \begin{rep@theorem}}
 {\end{rep@theorem}}}
\newtheorem{theorem}{Theorem}[section]
\newtheorem{lemma}[theorem]{Lemma}
\newtheorem{conjecture}[theorem]{Conjecture}
\newtheorem{proposition}[theorem]{Proposition}
\newtheorem{corollary}[theorem]{Corollary}
\newtheorem*{theorem*}{Theorem}
\newtheorem*{proposition*}{Proposition}
\theoremstyle{remark}
\newtheorem{remark}[theorem]{Remark}
\numberwithin{equation}{section}
\newcommand{\Z}{\mathbb{Z}}
\newcommand{\Q}{\mathbb{Q}}
\newcommand{\B}{\mathcal{B}}
\newcommand{\lnk}{\ell k}
\newcommand{\Arf}{\operatorname{Arf}}
\newcommand{\bdry}{\ensuremath{\partial}}
\newcommand{\eref}[1]{(\ref{#1})}
\newcommand{\into}{\hookrightarrow}
\newcommand{\gbar}{\overline{\delta}}
\newcommand{\abar}{\overline{\alpha}}
\newcommand{\g}{\delta}
\begin{document}
\title[Kauffman's conjectures on slice knots]{Counterexamples to Kauffman's Conjectures on Slice Knots}

\author{Tim D. Cochran$^{\dag}$}
\address{Department of Mathematics MS-136, P.O. Box 1892, Rice University, Houston, TX 77251-1892}
\email{cochran@rice.edu}

\author{Christopher William Davis}
\address{Department of Mathematics, University of Wisconsin-Eau Claire, Hibbard Humanities Hall 508,  Eau Claire WI 54702-4004}
\email{daviscw@uwec.edu}

\thanks{$^{\dag}$Partially supported by the National Science Foundation  DMS-1006908}

\date{\today}

\subjclass[2000]{46L55}

\keywords{}

\begin{abstract} 
In 1982 Louis Kauffman conjectured that if a knot in $S^3$ is a slice knot then on any Seifert surface for that knot there exists a homologically essential simple closed curve of self-linking zero which is itself a slice knot, or at least has Arf invariant zero. Since that time, considerable evidence has been amassed in support of this conjecture. In particular, many invariants that obstruct a knot from being a slice knot have been explictly expressed in terms of invariants of such curves on the Seifert surface. We give counterexamples to Kauffman's conjecture, that is, we exhibit (smoothly) slice knots  
that admit (unique minimal genus) Seifert surfaces on which every homologically essential simple closed curve of self-linking zero has non-zero Arf invariant and non-zero signatures.
\end{abstract}

\maketitle

\section{Introduction}

 A (classical) \textbf{knot} $K$ is an isotopy class of smooth embeddings of an oriented $S^1$ into $S^3$. $K$ is called \textbf{slice} if it bounds a  $2$-disk $D$  smoothly embedded in the 4-ball.  The disk $D$ is called a \textbf{slice disk} for $K$. These notions were first considered by Fox and Milnor in 1957 in the context of the study of singularities of surfaces in $4$-manifolds ~\cite{FoMi57,FoMi}. The question of which knots are slice knots is thus intimately related to local obstructions arising in a surgery-theoretic attempt to classify $4$-manifolds ~\cite{CF}.
 
While not every knot is a slice knot, every knot is the boundary of an embedding of a compact oriented embedded surface, $F$, in $S^3$ that is called a \textbf{Seifert surface} for $K$. In the late 1960's, Jerome Levine began a landmark program aimed towards deciding if a given knot is a slice knot, by 
 studying one of its Seifert surfaces ~\cite{L5}.
 
First, Levine found obstructions to a knot being a slice knot derived merely from the linking numbers of certain circles on $F$. Specifically he considered the \textbf{Seifert form}, a bilinear form $\beta_F:H_1(F)\times H_1(F)\to \Z$ given by $\beta_F([x],[y])= \lnk(x,y^+)$ where $x,y$ are oriented circles on $F$, $y^+$ denotes the result of pushing $y$ off of $F$ in the positive normal direction, and $\lnk$ denotes the linking number. Levine proved that if $K$ is a slice knot then \textit{any} associated Seifert form is \textbf{metabolic}, meaning that there exists  half-rank summand $\Z^g\subset H_1(F)$ on which $\beta_F$ is identically zero  ~\cite[Lemma 2]{L5}. This is equivalent to saying that if $K$ is a slice knot then, for any genus $g$ Seifert surface $F$, there exists a link $\{d_1,...,d_g\}$, of $g$ circles disjointly embedded on $F$, representing a rank $g$ summand of $H_1(F)$, for which $\lnk(d_i,d_j^+)=0$ for all $i,j$. We will call this a \textbf{set of surgery curves}, or, if $g=1$, a \textbf{surgery curve} (this link is also sometimes called a \textbf{derivative of $K$} ~\cite{derivatives}). Any knot whose Seifert form is metabolic is called an \textbf{algebraically slice knot}. Using this, Levine considered various numerical invariants of the Seifert form, certain signatures and discriminants, that obstruct a knot from being a slice knot. 

Secondly, Levine  went on to show that, for higher dimensional knots ($S^{2n-1}\hookrightarrow S^{2n+1}, n>1$),  (the analogues of) these linking numbers are the \textit{only} obstructions to $K$ being a slice knot, that is, \textit{any algebraically slice knot is a slice knot}. His method was to try to show that,  for any Seifert surface for a slice knot, there exists a set of surgery curves that is itself a slice link (recall that a \textbf{slice link} is one for which the components bound disjoint disks $D_i$ in $B^4$). For, if such disks exist,  then the Seifert surface for $K$ can be transformed (by a process called ambient surgery) to an embedded disk by replacing an annular neighborhood of each surgery curve with two copies of $D_i$, implying that $K$ is a slice knot.

In the classical dimension $n=1$ his proof fails. Indeed, in 1973 Casson and Gordon found additional obstructions to $K$ being a slice knot ~\cite{CG1,CG2}. But, significantly, their invariants also can be expressed in terms of circles on a Seifert surface (as first shown by Gilmer ~\cite{Gi3}). Whereas Levine showed that very simple invariants of surgery curves (namely their linking numbers) obstruct $K$ from being a slice knot, Casson-Gordon and Gilmer showed that certain sums of signatures of surgery curves  obstruct $K$ from being a slice knot. In recent years many more obstructions have been found and in almost every case  case they have been shown to be expressible in terms of lower order invariants of surgery curves (see below). Thus  hope has remained that Levine's philosophy/strategy was sound, namely that if $K$ is a slice knot then, for any Seifert surface, there exists a set of surgery curves that is itself a slice link.

In the simplest situation, when a (classical) slice (or merely algebraically slice) knot bounds a genus one Seifert surface, $F$,  it can be shown that there are only two isotopy classes of simple closed curves $a$ and $b$ that are essential on $F$ and satisfy $\lnk(a,a^+)=\lnk(b,b^+)=0$ ($\lnk(a,a^+)$ is called the \textbf{self linking} of $a$) ~\cite[Proposition 3]{Gi2}.  That is to say, given $F$, there are (modulo orientation) precisely two surgery curves for $K$.   An example is shown in Figure~\ref{fig:NonDoubleDerivative}.  
\begin{figure}[h!]
\setlength{\unitlength}{1pt}
\begin{picture}(250,100)

\put(0,5){\includegraphics[width=1.2in]{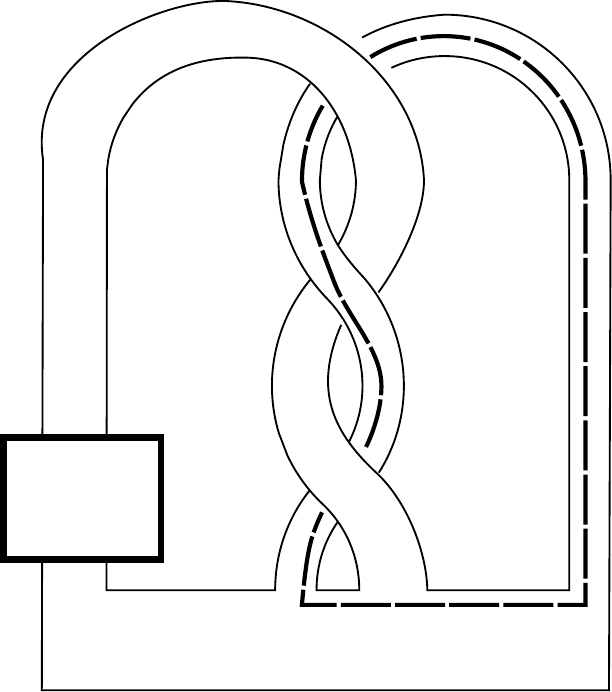}}
\put(50,10){$a$}
\put(5,30){$+3$}
\put(48,-5){$R$}

\put(160,5){\includegraphics[width=1.2in]{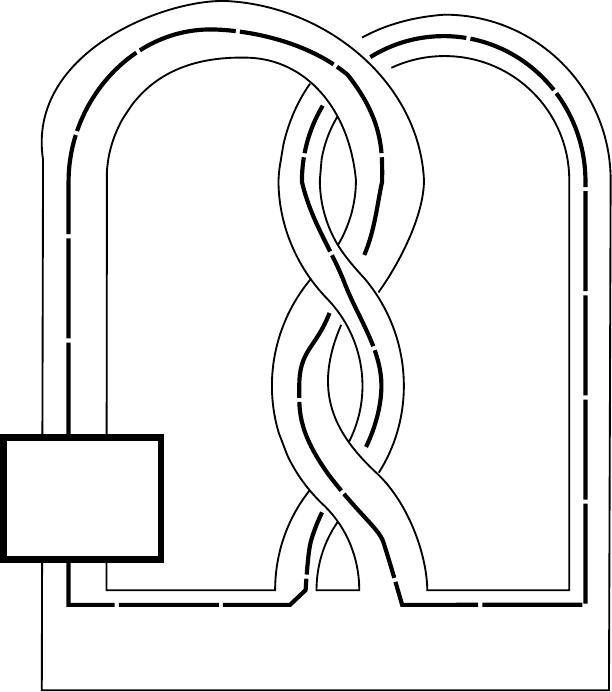}}
\put(165,30){$+3$}
\put(215,8){$b$}
\put(200,-5){$R$}
\end{picture}
\caption{Two surgery curves for the slice knot $R$.  Notice that $a$ is unknotted and $b$ is the trefoil knot.}\label{fig:NonDoubleDerivative}
\end{figure}
To reiterate, if $K$ were to bound a genus 1 Seifert surface $F$ admitting a surgery curve $a$ that is itself slice as a knot in $S^3$, then one could perform surgery on $F$ by using two copies of a slice disk for $a$ and deduce that $K$ is a slice knot.  

In 1982 Kauffman conjectured that the converse is true (in support of Levine's philosophy):
\begin{conjecture} [Kauffman's Strong Conjecture]\cite[p.226]{OnKnots}\cite[Kirby Problem N1.52]{Problems82}
If $K$ is a slice knot  and $F$ is a genus 1 Seifert surface for $K$ then there exists an essential simple closed curve $d$ on $F$ such that $\lnk(d,d^+)=0$ and $d$ is a slice knot.
\end{conjecture}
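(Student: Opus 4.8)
Since the title and abstract announce that this conjecture is \emph{false}, the goal is not to prove it but to refute it: I would aim to construct a smoothly slice knot $K$ with a genus $1$ Seifert surface $F$ on which \emph{both} of the only two surgery curves $a$ and $b$ (there are exactly two, by the cited Proposition~3 of Gilmer) have non-zero Arf invariant, and moreover non-zero signatures. Because the Arf invariant vanishes on slice knots, exhibiting such a $K$ simultaneously defeats the Arf-invariant and signature forms of the conjecture. The central difficulty is conceptual rather than computational: the entire point of Levine's philosophy is that a slice knot ``ought'' to be slice \emph{because} one of its derivatives is slice, so I must manufacture a knot whose sliceness is certified by something independent of the slice type of $a$ or $b$.

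My plan is to decouple the ambient knot types of $a$ and $b$ from the sliceness of $K$ by means of \emph{infection} (genetic modification). I would begin with a ribbon knot $R$ carrying a genus $1$ surface $F_0$ whose two surgery curves $a_0,b_0$ are simple---for instance with one of them unknotted, as in Figure~\ref{fig:NonDoubleDerivative}---and then tie auxiliary knots $J_1,J_2$ into the bands of $F_0$ by infecting $R$ along unknotted circles $\eta_1,\eta_2\subset S^3\setminus R$. Tying a knot into a band with the correct ($0$-framed) framing is a local operation, supported in a ball meeting $F_0$ in a trivial arc, so it changes no linking numbers of curves on $F_0$; hence the Seifert form---and therefore the two surgery curves as homology classes---is preserved, while the ambient knot types of $a_0,b_0$ are altered to connected sums or cables involving $J_1,J_2$. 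Taking $J_1,J_2$ to be trefoils then forces both $a$ and $b$ to have non-zero Arf invariant and non-zero signature, which I would confirm using additivity of the Arf invariant and the signature under connected sum together with Litherland's satellite formula.

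The crux---and the step I expect to be the main obstacle---is guaranteeing that the infected knot $K$ remains smoothly slice. The principle I would invoke is that infecting a slice knot $R$ with slice disk $D\subset B^4$ along a curve $\eta$ by an \emph{arbitrary} knot yields a slice knot, provided $\eta$ bounds an embedded disk in the exterior $B^4\setminus N(D)$. The homological necessary condition $\lnk(\eta,R)=0$ is met by a band meridian, but bounding an \emph{embedded} disk in the four-dimensional slice-disk exterior is far more delicate, and it stands in direct tension with the requirement that $\eta$ link the bands enough to knot $a$ and $b$. Indeed, in the naive situation of Figure~\ref{fig:NonDoubleDerivative} the sliceness of $R$ is certified precisely by the unknottedness of $a$, so a meridian of the $a$-band will \emph{not} bound a disk off $D$, and knotting $a$ destroys the slice disk. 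The real work is therefore to engineer $R$ together with a slice disk $D$ whose exterior explicitly contains disjoint embedded disks bounded by the chosen infection curves, so that $R$ is slice for a reason genuinely independent of the slice types of its two derivatives.

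Finally I would assemble the pieces: use the infection-preserves-sliceness lemma to conclude that $K$ is smoothly slice, re-invoke Gilmer's Proposition~3 to see that $F$ still carries exactly the two surgery curves $a$ and $b$, and compute their Arf invariants and signatures to confirm both are non-zero. I would also verify the auxiliary claims advertised in the abstract---that $F$ realizes the minimal genus and is the unique such surface (for example from the Alexander polynomial together with uniqueness results for genus-one Seifert surfaces)---so that the counterexample cannot be sidestepped by passing to a different Seifert surface for $K$.
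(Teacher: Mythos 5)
You have correctly identified the target (a slice knot whose genus-one surface has both surgery curves with nonzero Arf and signature), the general mechanism (infection along curves in the complement of the Seifert surface, which preserves the Seifert form while changing the knot types of $a$ and $b$), and even the crux (certifying that the infected knot is still smoothly slice). But the crux is exactly where your proposal stops: you reduce everything to finding $\eta_1,\eta_2$ that bound \emph{disjoint embedded disks} in $B^4-N(D)$ while still linking both bands appropriately, and you offer no construction of such disks --- indeed you note yourself that this requirement is ``in direct tension'' with the linking you need. That tension is real, and the paper does \emph{not} resolve it by producing such disks. Its main new tool, Theorem~\ref{thm:main}, weakens the hypothesis from ``each $\eta_i$ bounds a disk in the slice disk exterior'' to ``$\eta_1$ and $\eta_2$ cobound an embedded \emph{annulus} in $B^4-D$,'' at the price of infecting by the pair $(K,-K)$ rather than by two independently chosen knots. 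The annulus is far easier to exhibit: for the operator of Figure~\ref{fig:DoublingOperator} it is produced explicitly by two band moves down to an unlink. Proving the annulus version is itself nontrivial four-manifold topology --- one excises $N(A)$ and glues in $E(K)\times[0,1]$, obtaining a homotopy $4$-ball, and since the smooth $4$-dimensional Poincar\'e conjecture is unavailable one must show by an explicit handle argument (Lemma~\ref{trivialize knot}) that the result is diffeomorphic to $B^4$. None of this appears in your proposal, and without it (or a genuine construction of the disjoint disks you posit) the sliceness of $K$ is unestablished.

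A second, smaller gap follows from the first: because the paper must infect by $(K,-K)$ rather than by two independent trefoils, the Arf and signature contributions come with opposite signs and can cancel. The effect on a surgery curve $d$ is $(w_1-w_2)\Arf(K)$ where $w_i=\lnk(d,\eta_i)$, so one cannot simply ``tie trefoils into both bands'' and expect both curves to become nontrivial. The paper engineers the linking numbers ($\lnk(a,\eta_1)=2$, $\lnk(a,\eta_2)=1$, $\lnk(b,\eta_1)=-1$, $\lnk(b,\eta_2)=1$) so that the curve $a$ (originally unknotted) picks up $\Arf(K)$ and $\sigma_K(z^2)-\sigma_K(z)$, while the curve $b$ is left essentially unchanged and is nontrivial only because it was \emph{already} a trefoil on the original surface. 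Your plan of starting with ``one of them unknotted'' and knotting both via independent infections does not survive the passage to the $(K,-K)$ framework that actually makes the sliceness argument go through. Finally, the uniqueness of the minimal genus Seifert surface is not a routine citation: the paper devotes a separate section to it, combining a result on disjoint genus-one surfaces, an incompressibility argument in the satellite decomposition, and a free-group calculation showing $\eta_1$ is not in the image of $\pi_1(F)\to\pi_1(S^3-F)$.
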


Kauffman also posed the following much weaker conjecture. Recall that the Arf invariant is a $\mathbb{Z}_2$-valued  invariant of knots that is zero for slice knots \cite{Robertello65}.  

\begin{conjecture}[Kauffman's Weak conjecture] \cite[p. 226]{OnKnots}\cite[Kirby Problem N1.52]{Problems82}\label{conj:Kauffman}
If $K$ is a slice knot  and $F$ is a genus 1 Seifert surface for $K$ then there exists an essential simple closed curve $d$ on $F$ such that $\lnk(d,d^+)=0$ and the Arf invariant of $d$ vanishes.
\end{conjecture}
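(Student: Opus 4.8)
The plan is to exhibit a single smoothly slice knot $K$ bounding a genus one Seifert surface $F$ on which \emph{both} self-linking-zero curves are knotted with nonzero Arf invariant and nonzero Tristram--Levine signatures. Since by Gilmer's result (\cite[Proposition 3]{Gi2}) such an $F$ carries exactly two essential simple closed curves $a,b$ of self-linking zero up to isotopy and reversal, producing one such $K$ refutes Conjecture~\ref{conj:Kauffman} (and the Strong Conjecture simultaneously). I would model $F$ concretely as a disk with two bands $B_1,B_2$ meeting once, with framings chosen so that $\beta_F$ is metabolic with the two metabolizers generated by the band cores $a$ and $b$. The natural starting point is a known slice knot whose derivatives are understood, such as the knot $R$ of Figure~\ref{fig:NonDoubleDerivative}, for which $a$ is unknotted and $b$ is a trefoil; the task is then to ``knot up'' the unknotted derivative without destroying sliceness and without unknotting the other.

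The natural first tool is infection (satellite) along a curve $\eta$. Tying a knot $J$ into the band $B_1$ is infection along a meridian $\eta_1$ of that band, and since $\lnk(\eta_1,a)=1$ is odd the satellite formula gives $\Arf(a_{\mathrm{new}})=\Arf(a)+\Arf(J)$, with an analogous statement for the signatures; choosing $J$ a trefoil would make $\Arf(a_{\mathrm{new}})=1$. The first lemma I would establish is that, for a curve $\eta$ bounding an embedded disk in $B^4\setminus D$ with $D$ a slice disk for $R$, infection by a \emph{slice} knot again produces a slice knot, by performing the satellite operation directly on $D$ inside $B^4$.

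The hard part, and the real content of the theorem, is the tension this tool exposes. Because $\Arf$ and the Tristram--Levine signatures are concordance invariants, infecting by a slice knot cannot change the concordance class of a derivative, so it cannot create the desired nonzero invariants; conversely, infecting by a trefoil along a band meridian $\eta_1$ does change $\Arf(a)$, but $\eta_1$ is a meridian of $K$ and hence essential in $H_1(B^4\setminus D)$, so the construction only yields a slice disk in a homology four-ball rather than in $B^4$. Creating nonzero concordance invariants on \emph{both} derivatives of a genuinely slice knot is therefore exactly the obstacle, and it is precisely the phenomenon responsible for the failure of Levine's program in the classical dimension: the sliceness of $K$ must come from a mechanism invisible to either derivative individually.

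To resolve this I would abandon infecting a fixed model and instead build the knotted derivatives into an explicitly slice knot. Concretely, I would let $F$ have its two bands tied (mirror-)symmetrically into a knot $J$ and its reverse mirror $\overline{J}$ (say trefoils) and arrange the plumbing so that $K=\partial F$ is a symmetric union, hence ribbon and smoothly slice by an explicit ribbon disk, while the band cores give $a\simeq J$ and $b\simeq \overline{J}$; equivalently, one can realize the sliceness by infecting along curves lying deep in the derived series of $\pi_1(S^3\setminus R)$, bounding disks off $D$, so that the knottedness of $a$ and $b$ is produced by the pattern geometry rather than by the concordance type of any slice infecting knot. It then remains to (i) identify the two derivatives explicitly and compute $\Arf(a)=\Arf(b)=1$ and nonvanishing signatures from satellite formulas, (ii) invoke \cite[Proposition 3]{Gi2} to conclude these are the only self-linking-zero essential curves on $F$, and (iii) check via the Alexander polynomial that $F$ is the unique minimal-genus surface, so that no other Seifert surface can rescue the conjecture. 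I expect steps (i)--(iii) to be routine; the genuine difficulty is the simultaneous sliceness-and-nontriviality arranged above.
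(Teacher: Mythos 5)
Your framing is right: this is a conjecture the paper refutes, and you correctly isolate the crux --- a single smoothly slice knot with a genus one Seifert surface on which both self-linking-zero curves have nonzero Arf invariant --- and correctly diagnose why the obvious moves fail (infection by a slice knot preserves the concordance class of each derivative; infection by a trefoil along a band meridian only yields a slice disk in a homology four-ball). But the construction you offer to resolve that tension is where the entire content lies, and as written it has gaps. First, your concrete model, in which both band cores are the two self-linking-zero curves, forces both band framings to vanish, hence a Seifert matrix of the form $\begin{pmatrix}0&1\\0&0\end{pmatrix}$ and Alexander polynomial one; you are then in Whitehead-double territory, where smooth sliceness together with knotted derivatives is precisely the hard open problem, and where the known gauge-theoretic obstructions work against you. (In the paper's example the two surgery curves are a band core and a class of the form $x-y$, not both band cores.) Second, your fallback of ``infecting along curves deep in the derived series, bounding disks off $D$, by slice knots'' is self-defeating by your own earlier observation: such infections preserve the concordance class, hence the Arf invariant and the signature function, of each derivative, so they cannot knot up an unknotted derivative in the sense required. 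Third, the symmetric-union suggestion is not carried out: no specific plumbing is given, no verification that the proposed curves really are the surgery curves, and no argument that the ribbon structure does not force one derivative to be slice --- which is exactly what the accumulated evidence of Casson--Gordon, Gilmer and Cooper suggests tends to happen.

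The paper's mechanism is genuinely different and is the missing idea: infect along a two-component trivial link $(\eta_1,\eta_2)$ in the complement of $F$ by $(K,-K)$ with $K$ \emph{not} slice, where $\eta_1$ and $\eta_2$ cobound an annulus not in $S^3-R$ but in $B^4$ minus a slice disk for $R$. Theorem~\ref{thm:main} shows the result is still smoothly slice, the hard part being a handle argument that the surgered four-manifold is diffeomorphic (not merely homeomorphic) to $B^4$. Meanwhile each surgery curve $d$ is itself changed by the infection, so by Corollary~\ref{ArfCor} its Arf invariant changes by $(\lnk(d,\eta_1)-\lnk(d,\eta_2))\Arf(K)$. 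Choosing $(\eta_1,\eta_2)$ with linking numbers $(2,1)$ against one surgery curve and $(-1,1)$ against the other, as in Figure~\ref{fig:DoublingOperator}, makes both Arf invariants (and both signature functions) nonzero simultaneously. Your proposal does not contain this four-dimensional annulus trick, and without it, or a fully worked-out substitute, the counterexample is not established.
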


In the intervening $30$ years evidence has accumulated in support of these conjectures.   As mentioned above, early work of Casson, Cooper, Gilmer, Gordon, Livingston,  Litherland and others established that if a genus one knot with nontrivial Alexander polynomial is slice (even in the topological category) then, for one of its surgery curves,  many classical algebraic invariants that obstruct sliceness (discriminants and certain sums of signatures) must vanish \cite{Gi3,Lith1984,Gi2,GL1,GL2} (for a complete survey see \cite{GiLi2011}). We call these \textit{zero$^{th}$-order invariants}. This was powerful evidence that at least one of the surgery curves must be itself at least an algebraically slice knot.  Under these same hypotheses later work of Cochran-Orr-Teichner, Cochran-Harvey-Leidy and others showed  that families of higher-order $L^2$-signatures of at least one surgery curve must also vanish ~\cite{structureInConcordance,derivatives}. Other work, using gauge theory and Heegard Floer homology, has shown that even for knots that are topologically slice there is evidence for Kauffman's conjecture ~\cite{Go1,CGompf,Hed:WHD,HedKirk2}. On the other hand, some evidence in a somewhat negative direction was recently provided by ~\cite{GiLi2011}, where  it was shown that the totality of the currently-known restrictions on the zero$^{th}$-order  invariants  of a surgery curve, $J$,  is insufficient to conclude that $J$ is algebraically slice.

In this paper we provide counterexamples to Kauffman's conjectures.

\begin{theorem}\label{counterex}
There exists a slice knot admitting a  genus 1 Seifert surface $F$ on which there is no surgery curve whose Arf invariant vanishes. Moreover there are such examples where $F$ is the unique minimal genus one Seifert surface (up to isotopy).
\end{theorem}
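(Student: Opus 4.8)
The plan is to construct an explicit slice knot $K$ together with a genus one Seifert surface $F$ and then to verify, by direct computation, that both of its surgery curves have nonzero Arf invariant. By Gilmer's result \cite[Proposition 3]{Gi2} quoted above, a genus one Seifert surface carries (up to orientation) exactly two essential simple closed curves of self-linking zero, so it suffices to exhibit a single slice knot where both of these curves are, say, trefoils or more generally knots of nontrivial Arf invariant. The key constraint is that the two surgery curves $a,b$ cannot be chosen independently: once $F$ is fixed, the Seifert form determines both of them, and indeed one expects the knot types of $a$ and $b$ to be linked by the ambient geometry of $F$ in $S^3$. So the first and most delicate step is to reverse-engineer $F$: rather than starting from an arbitrary slice knot, I would start by choosing the desired pair of surgery curves (each with Arf invariant one) and the framings on them, and then attempt to build a genus one surface realizing that data whose boundary is provably slice.

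\emph{Realizing sliceness.} The natural mechanism to \emph{guarantee} that $K$ is slice, while retaining control of the curves on $F$, is to build $K$ by an infection/satellite construction or by a band-sum / doubling operation that produces a ribbon knot. The cleanest route is to arrange $K$ as the boundary of a surface obtained from a disk by attaching two bands so that the resulting knot is manifestly ribbon (hence smoothly slice), while the cores of the bands, together with the dual curves, give precisely $a$ and $b$. One would then tie local knots into the bands — these local knots are exactly the knot types of the surgery curves — choosing them to have Arf invariant one (e.g.\ trefoils), arranged so as not to disturb ribbonness. The Seifert form of such a banded surface is computable from the bands' framings and linking, and one checks it is metabolic with the expected two metabolizing directions.

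\emph{Computing Arf invariants of the surgery curves.} Having fixed $F$, I would write the Seifert matrix $V=\begin{pmatrix}\lnk(a,a^+)&\lnk(a,b^+)\\ \lnk(b,a^+)&\lnk(b,b^+)\end{pmatrix}$ and solve for the primitive classes $v\in H_1(F)$ with $v^{T}Vv=0$; these give the two surgery curves. Their knot types are read off from the banded diagram, and their Arf invariants are computed either directly from their Seifert forms or, more efficiently, via the Arf invariant's characterization through the Alexander polynomial mod $8$ or through the Seifert form mod $2$. The design constraint is to make \emph{both} resulting curves carry the local Arf-one knotting; because the two surgery curves traverse the two bands in different combinations, inserting Arf-one knots into the bands in a symmetric fashion should force both surgery curves to inherit nonzero Arf invariant.

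\emph{Uniqueness of the minimal genus surface.} For the ``moreover'' clause I would additionally arrange that the Alexander polynomial of $K$ is nontrivial and that $F$ is the unique incompressible Seifert surface up to isotopy. The standard tool here is to exhibit $F$ as a fiber surface, or to compute the Alexander module / Thurston norm and invoke uniqueness of minimal genus Seifert surfaces for fibered knots, or to use sutured-manifold/Gabai-type arguments showing the complement of $F$ is a handlebody with a unique taut sutured structure. The \textbf{main obstacle} I anticipate is the simultaneous satisfaction of all constraints: forcing \emph{both} surgery curves to have nonzero Arf while keeping $K$ slice and keeping $F$ the unique minimal genus surface. Sliceness and nonzero Arf on the derivatives pull in opposite directions (a slice knot's derivative is the natural candidate for being slice), so the construction must exploit the freedom that a slice knot may admit derivatives that are \emph{non-slice} yet whose non-sliceness is invisible to the zero$^{th}$-order obstructions — precisely the phenomenon flagged by \cite{GiLi2011}. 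Making the two trefoil-type knottings compatible with a ribbon move for $K$ is where the real work lies, and I expect to verify ribbonness by exhibiting an explicit ribbon disk or a sequence of ribbon moves rather than by any general principle.
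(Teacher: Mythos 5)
Your outline correctly identifies the two computational ingredients (the Arf invariant of the surgery curves via the Seifert form/Alexander polynomial, and the need for a uniqueness argument for $F$), but it stops exactly at the point where the actual content of the theorem lives, and the mechanism you propose for that point does not work as stated. You say you will ``tie local knots into the bands \dots arranged so as not to disturb ribbonness,'' and later concede that ``making the two trefoil-type knottings compatible with a ribbon move for $K$ is where the real work lies.'' That is not a proof step; it is a restatement of the difficulty. Tying a knot $J$ into a band of a genus one Seifert surface is an infection along a curve encircling that band, and in general it destroys sliceness: the untwisted Whitehead double is precisely the case where one ties $J$ into a band of the unknot's genus one surface, and the result is generically not smoothly slice. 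Worse, the thirty years of evidence you cite (Casson--Gordon, Gilmer, Cochran--Orr--Teichner) says that whenever such a band-modified knot \emph{does} stay slice, at least one surgery curve is forced to have many vanishing invariants --- so the naive symmetric insertion of Arf-one knots into both bands is exactly the move those theorems obstruct. The paper's resolution is a genuinely new slicing mechanism (Theorem~\ref{thm:main}): infect along a \emph{pair} of curves $\eta_1,\eta_2$ by $K$ and $-K$, where $\eta_1,\eta_2$ cobound an annulus in the complement of a slice disk \emph{in $B^4$} (not in $S^3$). This 4--dimensional freedom is what allows the linking numbers $(\lnk(a,\eta_i),\lnk(b,\eta_i))$ to be chosen so that, via Corollary~\ref{ArfCor}, \emph{both} surgery curves pick up $\Arf(K)$ with odd multiplicity; such $\eta_i$ are not parallel in $S^3-F$ and cannot be produced by any pushed-off curve on $F$ or by local band-knotting. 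Proving the resulting 4--manifold is diffeomorphic to $B^4$ (rather than merely homeomorphic) is itself a nontrivial handle argument occupying a full section. Without something playing the role of Theorem~\ref{thm:main}, your construction has no reason to produce a slice knot, and the proposal has a genuine gap at its core.

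On the ``moreover'' clause: your suggestion to exhibit $F$ as a fiber surface cannot succeed, since the only genus one fibered knots are the trefoil and the figure eight, neither of which is slice; and the Thurston norm controls minimal genus, not uniqueness of the Seifert surface. The paper instead uses uniqueness of the genus one surface for the \emph{pattern} knot $R$ (via knot Floer homology), Scharlemann--Thompson to reduce to a disjoint competing surface, an incompressibility argument (Corollary~\ref{cor:disjointeta}) to pull that surface back to a Seifert surface for $R$, and finally an explicit free-group computation (Lemma~\ref{lem:algebra}) to show $\eta_1$ cannot lie in the product region between the two surfaces. Some argument of this specificity is needed; the general tools you name do not by themselves yield uniqueness.
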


As mentioned above, Levine associated to a knot $K$ its signature function $\sigma_K:S^1\to \Z$.  This function (properly normalized) vanishes if $K$ is a slice knot.  This suggests a different weakening of Kauffman's Strong Conjecture:

\begin{conjecture}\label{conj:signature}
If $K$ is a slice knot  and $F$ is a genus 1 Seifert surface for $K$ then there exists a surgery curve on $F$ with vanishing Levine-Tristram signature function.
\end{conjecture}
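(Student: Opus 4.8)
The plan is to refute Conjecture~\ref{conj:signature} by the same device that proves Theorem~\ref{counterex}: instead of analyzing the surgery curves of a given slice knot, I would build a slice knot \emph{from its surgery curves outward}, designing a genus $1$ Seifert surface $F$ whose two surgery curves $a$ and $b$ are prescribed knots with non-vanishing Levine--Tristram signature functions, and then arranging sliceness of $K=\partial F$ by a four-dimensional argument that does not use either surgery curve being slice. The key point, implicit in the discussion after Figure~\ref{fig:NonDoubleDerivative}, is that the standard route to sliceness---capping a surgery curve by two parallel copies of a slice disk---automatically produces a surgery curve with vanishing Arf invariant and signatures. Hence any counterexample must be slice for a genuinely different reason, and I would therefore produce the slice disk ambiently in $B^4$ while keeping both of $a$ and $b$ ``bad.''

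Concretely, I would begin with a ribbon (hence smoothly slice) seed knot $R$ bounding a genus $1$ surface $F_0$ whose two surgery curves $a_0,b_0$ are unknots, together with an explicit ribbon disk $D_0\subset B^4$, and then modify $R$ by genetic infection along two unknotted circles $\eta_1,\eta_2\subset S^3\setminus R$, infecting by knots $J_1,J_2$. The circles are to be chosen so that (i) each $\eta_i$ bounds an embedded disk in $B^4\setminus D_0$, which by the standard infection lemma keeps the infected knot $K$ smoothly slice for \emph{any} $J_1,J_2$; and (ii) $\eta_1$ encircles the band carrying $a_0$ (and is disjoint from the band of $b_0$), and symmetrically $\eta_2$ encircles the band of $b_0$, so that the infections tie $a_0$ into $J_1$ and $b_0$ into $J_2$ as \emph{local} knotting of the two bands. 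Local band knotting leaves the Seifert matrix unchanged, so taking $J_1,J_2$ to be, say, $(2,2k{+}1)$-torus knots makes both surgery curves have non-vanishing signature functions (and non-zero Arf invariant, recovering Theorem~\ref{counterex} at the same time).

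Two verifications remain. First, I must confirm that $F$ still has \emph{exactly} $a$ and $b$ as surgery curves: since the Seifert matrix is unchanged, it has zero diagonal and off-diagonal entries differing by the intersection number $\pm1$, hence summing to an odd (in particular non-zero) integer; the associated self-linking quadratic form is then $xy\cdot(\text{odd})$, whose only primitive zeros are $[a]$ and $[b]$, so by \cite[Proposition 3]{Gi2} there is no third surgery curve to exploit. Second, for the ``moreover'' one must show $F$ is the unique minimal genus Seifert surface up to isotopy; I would argue that $F$ is incompressible and, via a sutured-manifold/JSJ analysis of the exterior, that it is rigid, using that $a$ and $b$ can be taken to be distinct, sufficiently complicated (e.g.\ prime, hyperbolic) knots so that no isotopy carries $F$ to a competitor.

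The heart of the argument, and the main obstacle, is satisfying (i) and (ii) simultaneously. The naive attempt fails because a single ribbon disk $D_0$ realizes only a rank $1$ metabolizer of $H_1(F_0)$, so at most one surgery-curve class can die in $B^4\setminus D_0$; one cannot simply take the $\eta_i$ to be the (mutually dual) surgery curves themselves. The resolution is that $\eta_1,\eta_2$ are complement curves that are null-homotopic in the ribbon exterior yet still encircle the two bands, and the construction must exhibit such curves explicitly---most transparently by drawing $R$ so that $\eta_1$ and $\eta_2$ visibly bound disks in the ribbon complement while piercing the bands carrying $a_0$ and $b_0$. Carrying out this bookkeeping---preserving sliceness under \emph{both} infections while guaranteeing that \emph{both} surgery curves acquire non-vanishing signatures---is where essentially all of the work lies.
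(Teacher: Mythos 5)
Your overall strategy (build the slice knot outward from prescribed surgery curves, via infection along curves lying in the complement of a seed slice disk) is the right one, but the sliceness mechanism you rely on is not a theorem, and this is a fatal gap. There is no ``standard infection lemma'' asserting that if each $\eta_i$ bounds an embedded disk in $B^4\setminus D_0$ then $R_{\eta_1,\eta_2}(J_1,J_2)$ is smoothly slice for \emph{arbitrary} $J_1,J_2$. If you remove a tubular neighborhood $N(\Delta)\cong D^2\times D^2$ of such a disk and try to glue back a piece $V$ realizing the infection by $J$, the boundary bookkeeping forces $\partial V\cong S^3$ with $J\subset\partial V$ required to bound an embedded disk in $V$ whose exterior is standard; producing a smooth $B^4$ this way requires $J$ itself to be smoothly slice. (If the lemma held as you state it, every untwisted Whitehead double would be smoothly slice, which is false.) But once you are forced to take $J_1,J_2$ slice, your surgery curves $a_0\# J_1$ and $b_0\# J_2$ are slice and have identically vanishing signature functions, so no counterexample results. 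The correct mechanism --- and the actual main theorem of the paper --- is different: the two infection curves must cobound an embedded \emph{annulus} in $B^4\setminus D$, and the infecting knots must be the cancelling pair $(K,-K)$; even then, showing that the surgered $4$-manifold is diffeomorphic (not merely homeomorphic) to $B^4$ occupies an entire section of handle calculus.

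This changes the combinatorics of the endgame in a way your plan does not account for. Because the infections come as $(K,-K)$ carried by curves that are parallel in $B^4\setminus D$, their net effect on a surgery curve $c$ is $\sigma_K(z^{w_1})-\sigma_K(z^{w_2})$ with $w_i=\lnk(c,\eta_i)$, which vanishes identically whenever $|w_1|=|w_2|$; you cannot simply let $\eta_1$ encircle only the band of $a_0$ and $\eta_2$ only the band of $b_0$ and infect independently by unrelated knots. The paper's example instead starts from a seed surface whose surgery curves are an unknot $a$ and a trefoil $b$, with $(\lnk(a,\eta_1),\lnk(a,\eta_2))=(2,1)$ and $(\lnk(b,\eta_1),\lnk(b,\eta_2))=(-1,1)$: the pair fails to cancel on $a$ (yielding signature $\sigma_K(z^2)-\sigma_K(z)\not\equiv 0$ and Arf invariant $\Arf(K)$), cancels on $b$, and $b$ is already a trefoil with nonzero signature. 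Your quadratic-form observation that there are only two surgery curves is fine, and your uniqueness sketch is broadly in the spirit of Section~\ref{sec:uniqueSS}, but without the annulus theorem and a concrete $(R,\eta_1,\eta_2)$ realizing a non-cancelling linking pattern while admitting the required annulus, the construction does not get off the ground.
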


Over the years, significant evidence has been given for this conjecture as discussed above.   

\begin{theorem}\label{SignatureCounterExample}
There exists a slice knot admitting a  genus 1 Seifert surface $F$ on which there is no surgery curve with vanishing signature function. Moreover there are such examples where $F$ is the unique minimal genus one Seifert surface.
\end{theorem}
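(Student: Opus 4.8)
The plan is to build the knot by \emph{infection} (genetic modification) performed on a simple genus one slice knot, choosing the infection data so that the resulting knot is provably still slice while the knot types of \emph{both} of its surgery curves are forced to have non-vanishing Levine--Tristram signature. This is a strengthening of the construction behind Theorem~\ref{counterex}: there it suffices to make two $\Z_2$-valued Arf invariants nonzero, whereas here one must steer the two full signature functions $\sigma_a,\sigma_b\colon S^1\to\Z$ away from zero simultaneously, while avoiding the cancellations that the slice hypothesis tends to impose.

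I would begin with a genus one ribbon knot $R$ bounding an explicit slice disk $D$, together with its genus one surface $F_0$ and its two surgery curves $a_0,b_0$ (the knot of Figure~\ref{fig:NonDoubleDerivative} is a natural seed). The engine is an infection principle of Cochran--Orr--Teichner type \cite{structureInConcordance}: if $\eta\subset S^3\setminus R$ is an unknot with $\lnk(\eta,R)=0$ that bounds an embedded disk in $B^4\setminus D$, then infecting $R$ along $\eta$ by an \emph{arbitrary} knot $J$ yields a knot $K$ that is again smoothly slice. Because $\lnk(\eta,R)=0$, infection preserves all linking numbers among curves on $F_0$; hence $K$ again bounds a genus one surface $F$ with the same Seifert form as $R$, and $a_0,b_0$ persist as the (up to orientation, only two, by \cite[Proposition 3]{Gi2}) surgery curves $a,b$ of $F$. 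What does change is their knot type: each of $a,b$ becomes a satellite with companion $J$, so by the satellite (Litherland) signature formula
\[
\sigma_{a}(\omega)=\sigma_{a_0}(\omega)+\sigma_{J}\!\left(\omega^{\,\lnk(a_0,\eta)}\right),\qquad
\sigma_{b}(\omega)=\sigma_{b_0}(\omega)+\sigma_{J}\!\left(\omega^{\,\lnk(b_0,\eta)}\right),
\]
and more generally one may infect simultaneously along several disjoint such curves $\eta_1,\dots,\eta_k$, adding the corresponding terms.

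The decisive step is to choose the curves $\eta_i$ and companions $J_i$ so that \emph{neither} resulting function vanishes. The key point is that the winding numbers $\lnk(a_0,\eta_i)$ and $\lnk(b_0,\eta_i)$ can be prescribed independently: since $a_0$ and $b_0$ are distinct curves in $S^3$, I would locate some $\eta_i$ linking $a_0$ but not $b_0$ and others linking $b_0$ but not $a_0$ (all of them dying in $\pi_1(B^4\setminus D)$), which decouples the two signature functions. Taking the $J_i$ to be $(2,2m_i+1)$--torus knots, whose signature functions are step functions with a single controllable jump, I can place a jump of $\sigma_a$ at some $\omega_a\in S^1$ and a jump of $\sigma_b$ at a different $\omega_b$, so that each function is nonzero near its own jump and no cancellation can annihilate it. Because a genus one surface carries exactly two surgery curves, controlling $a$ and $b$ controls all of them, giving the first assertion of Theorem~\ref{SignatureCounterExample}.

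For the uniqueness clause I would take the $J_i$ knotted enough that the companion tori they introduce determine the torus decomposition of the complement of $F$, and invoke a criterion ensuring $F$ is the unique minimal genus one Seifert surface (this can be read off from the Seifert form and Alexander module being sufficiently generic, a property preserved by our infections). The main obstacle is precisely the tension flagged at the outset: moving the signature functions requires infecting by knots of large signature along curves that link $a_0$ or $b_0$, yet retaining an honest slice disk requires those same curves to be null-homotopic in $B^4\setminus D$; the heart of the argument is to exhibit \emph{enough} independent curves of this dual nature to set $\sigma_a$ and $\sigma_b$ separately, rather than being forced into the averaging relations of Casson--Gordon--Gilmer theory.
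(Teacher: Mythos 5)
The argument rests on an ``infection principle'' that is not a theorem: you assert, attributing it to Cochran--Orr--Teichner, that if $\eta$ has linking number zero with $R$ and bounds an embedded disk in $B^4\setminus D$, then $R_\eta(J)$ is smoothly slice for an \emph{arbitrary} companion $J$. No such result is known, and this paper exists precisely because nothing of the sort is available. What is true is only that $R_\eta(J)$ is slice when $J$ itself is slice --- in which case $\sigma_J\equiv0$ and the satellite formula contributes nothing to the derivatives --- and that suitable triviality of $\eta$ in $B^4\setminus D$ makes certain \emph{obstructions} vanish, which is far from sliceness (indeed it is the engine by which Cochran--Orr--Teichner and Cochran--Harvey--Leidy build \emph{non-slice} knots). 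Worse, the curves you need provably cannot exist. A curve bounding an embedded disk in $B^4\setminus D$ is null-homotopic there, hence dies in the Alexander module of the disk exterior; the kernel of the map from the Alexander module of $R$ to that of $B^4\setminus D$ is a self-annihilating (Lagrangian) submodule, and for the standard genus one seeds, whose Alexander polynomial factors into two coprime pieces, the only candidates are the two summands dual to the two derivatives. Consequently every admissible $\eta$ has its pair of linking numbers $(\lnk(\eta,a_0),\lnk(\eta,b_0))$ confined to one of two lines; a curve ``linking $a_0$ but not $b_0$'' that dies in $\pi_1(B^4\setminus D)$ simply is not there, and infections along the curves that do exist leave at least one derivative a winding-number-zero satellite, hence with unchanged signature function. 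This is exactly the Casson--Gordon--Gilmer cancellation you acknowledge in your final paragraph; acknowledging it is not overcoming it. The paper's contribution is the mechanism that does overcome it (Theorem~\ref{thm:main}): infect by $K$ and $-K$ along a \emph{pair} $(\eta_1,\eta_2)$ that merely cobounds an annulus in $B^4\setminus D$, so that only the difference $\eta_1-\eta_2$ is constrained while each $\eta_i$ may link both derivatives nontrivially; proving that the resulting manifold is diffeomorphic (not merely homeomorphic) to $B^4$ is the handle-theoretic content of Section~\ref{proof}.

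Two smaller points. Even granting the annulus mechanism, the contribution to a derivative $c$ is $\sigma_K(z^{\lnk(c,\eta_1)})-\sigma_K(z^{\lnk(c,\eta_2)})$, which vanishes identically whenever $\lnk(c,\eta_1)=\pm\lnk(c,\eta_2)$ because $\sigma_K(z^{-1})=\sigma_K(z)$; in the paper's example the infection contributes nothing at all to the derivative $b$, whose nonzero signature must come from $b$ already being a trefoil in the seed $R$ --- so one cannot freely ``place independent jumps'' for the two curves as you propose. And the uniqueness clause cannot be ``read off'' from genericity of the Seifert form or Alexander module: the paper's proof occupies Section~\ref{sec:uniqueSS} and requires Scharlemann--Thompson to produce disjoint competing surfaces, an incompressible-annulus argument to push such a surface off the infection tori, uniqueness of the minimal genus surface for the seed knot via knot Floer homology, a product-region argument, and a free-group computation ruling out $\eta_1$ from the image of $\pi_1(F)$.
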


Therefore, for classical knots, Levine's strategy needs to be re-thought. In a future paper we will attempt to explain the systematic reason behind this failure in such a way that suggests how it might be repaired.

It is instructive to see how Kauffman's Strong Conjecture relates to other famous conjectures in knot theory:
\begin{itemize}

\item [a.] A special case of Kauffman's Strong Conjecture is the conjecture that ``a knot is slice if and only if its untwisted Whitehead double is slice'' \cite[Problem 1.38]{Problems82}. Since our counterexamples are not Whitehead doubles, this specialized conjecture remains open. 

\item [b.] In the topological category all of the conjectures above are false because the untwisted Whitehead double of \textit{any} knot, $J$, is topologically slice by ~\cite{Freedman84}, but has a genus 1 Seifert surface for which both of surgery curves have the knot type  of $J$. However, if knots of Alexander polynomial one are excluded, Kauffman's strong conjecture was previously open in the topological category. The examples in this paper are counterexamples to this conjecture.

\item [c.] In Section~\ref{sec:sliceribbon} we prove that a \textit{stable}  version of Kauffman's Strong Conjecture is related  to the famous Slice-Ribbon Conjecture. 

\end{itemize}

The authors are grateful to Mark Powell for pointing out an error in an earlier version of Section~\ref{sec:uniqueSS}.

\section{Preliminaries}\label{sec:prelim}

\subsection{Infection}

Our counterexamples to Kauffman's conjectures arise from an operation on knots called infection, which is a mild generalization of the satellite construction. In this subsection we recall the relevant definitions.  Consider oriented knots $K_1,\dots, K_n$ and an $n$-component oriented trivial link  $(\eta_1, \dots , \eta_n)$. Let $N(\eta_i)$ be disjoint tubular neighborhoods of the $\eta_i$.  Let $E=S^3-(N(\eta_1)\cup\dots \cup N(\eta_n))$.  Let $N(K_i)$ be a tubular neighborhood of $K$ and $E(K_i)=S^3-N(K_i)$.  For $i=1,\dots, n$ glue to $E$ a copy of $E(K_i)$ along $\bdry (N(\eta_i))$ so that the meridian of $\eta_i$ is identified with the longitude of $K_i$ and the longitude of $\eta_i$ is identified with the meridian of $K_i$.  Call the resulting 3-manifold $M$.  It is easy to see that $M$ is diffeomorphic to $S^3$.

If $R$ is a knot contained in $E$ and $f:E\to M$ is the natural inclusion then a new knot, $R_{\eta_1,\dots, \eta_n}(K_1,\dots, K_n)$, is given by the image of $R$ under $f$.  Since $M\cong S^3$, this yields a knot in $S^3$.  This knot is called the \textbf{result of infection} of $R$ by $K_1, \dots, K_n$ along $\eta_1,\dots, \eta_n$. If $n=1$ this is the same as the satellite knot of $K_1$ with pattern knot $R$ and axis $\eta_1$.

 The $n$-tuple 
 $(\lnk(R,\eta_1),\dots, \lnk(R,\eta_n))$
  is called the \textbf{winding} of $R_{\eta_1,\dots, \eta_n}$. If the winding is $(0,...,0)$ then the $(n+1)$-component link $(R,\eta_1,\eta_2, \dots, \eta_n)$, also denoted   $R_{\eta_1,\dots, \eta_n}$, is called a \textbf{doubling operator}.

In practice, the process of infection ties the strands of $R$ which pass through the disk bounded by $\eta_i$ into the knot $K_i$, as is depicted schematically in Figure~\ref{fig:Example}.
\begin{figure}[htbp]
\setlength{\unitlength}{1pt}
\begin{picture}(180,100)

\put(0,0){\includegraphics[height=1.2in]{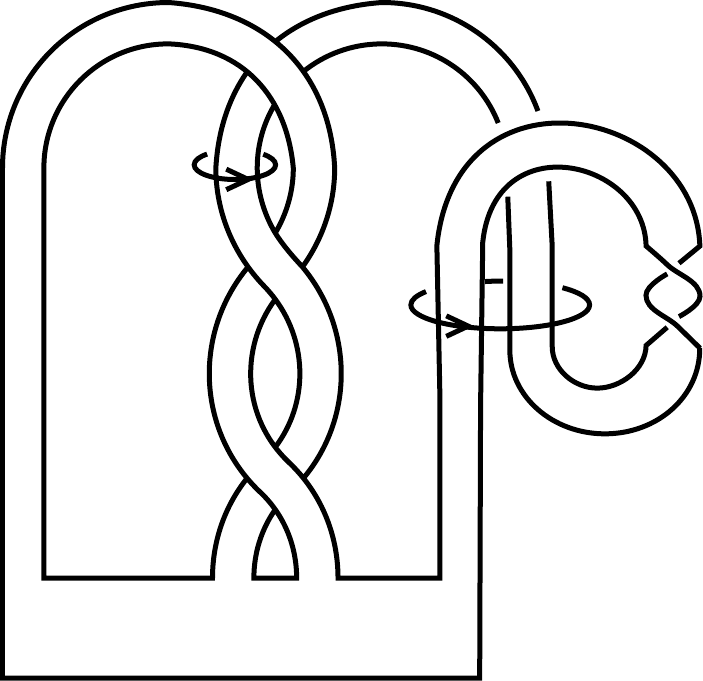}}
\put(18,60){$\eta_1$}
\put(73,52){$\eta_2$}
\put(10,22){$R$}

\put(100,0){\includegraphics[height=1.2in]{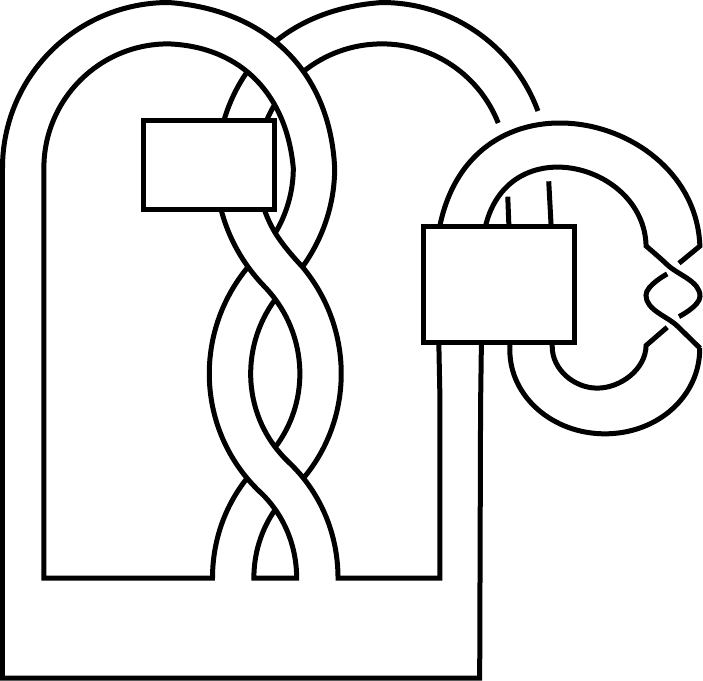}}
\put(120,63){$K_1$}
\put(158,47){$K_2$}

\end{picture}
\caption{\emph{Left:} A doubling operator $R_{\eta_1,\eta_2}$.  \emph{Right:} The result of infection, $R_{\eta_1,\eta_2}(K_1,K_2)$. }\label{fig:Example}
\end{figure}

\subsection{The Arf invariant}

The Arf invariant is a $\Z_2$-valued invariant of knots that is zero for slice knots \cite{Robertello65}.  Levine (and independently Murasugi) showed that it can be computed in terms of the Alexander polynomial. 

\begin{proposition}[\cite{Le66}, see also Theorem 2 of \cite{Murasugi69}]\label{ArfByAlex}
If $K$ is a knot and $\Delta_K(t)$ is the representative of the Alexander polynomial of $K$ satisfying that $\Delta_K(1)=1$ and $\Delta_K(t)=\Delta_K(t^{-1})$ then 
$
\Delta_K(-1) \equiv 1+4\Arf(K) \mod 8.
$
\end{proposition}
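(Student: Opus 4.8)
The plan is to compute everything from a Seifert matrix. Fix a genus $g$ Seifert surface $F$ for $K$ with Seifert matrix $V$ (so $V$ represents $\beta_F$ in a basis of $H_1(F)\cong\Z^{2g}$), and recall that the symmetrized representative normalized as in the statement is $\Delta_K(t)=\det\!\big(t^{1/2}V-t^{-1/2}V^T\big)$: indeed $\Delta_K(1)=\det(V-V^T)=1$ because $V-V^T$ is the unimodular intersection form on $F$ (a perfect-square, hence $+1$, determinant), and $\Delta_K(t^{-1})=\Delta_K(t)$ follows by transposing inside the determinant and pulling out $(-1)^{2g}$. Choosing $t^{1/2}=i$ gives $\Delta_K(-1)=\det\!\big(i(V+V^T)\big)=(-1)^g\det W$, where $W:=V+V^T$. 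Thus the proposition is equivalent to the purely matrix statement $\det W\equiv(-1)^g(1+4\Arf(K))\pmod 8$. Note $W$ is symmetric with even diagonal (since $W_{ii}=2V_{ii}$), and $W\equiv V-V^T\pmod 2$, so $\det W\equiv 1\pmod 2$ is odd.

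The key idea is to work over the localization $\Z_{(2)}$ of $\Z$ at the prime $2$, in which every odd integer is a unit, so that $W$ becomes unimodular. I claim both sides of the target congruence are invariants of $W$ under $\Z_{(2)}$-congruence $W\mapsto P^TWP$, $P\in GL(2g,\Z_{(2)})$. For the determinant this is immediate from $\det(P^TWP)=(\det P)^2\det W$, because the square of a unit in $\Z_{(2)}$ is $\equiv 1\pmod 8$. For the Arf invariant, observe that the quadratic refinement computing $\Arf(K)$ is $q(x)=\lnk(x,x^+)=V(x,x)\bmod 2$; since $x^TVx=x^TV^Tx$ we have $V(x,x)=\tfrac12 W(x,x)$, so $q$ is recovered from $W$ alone via $q(x)=\tfrac12 W(x,x)\bmod 2$, and under congruence $q\mapsto q\circ\overline{P}$, whence $\Arf$ is preserved. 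In particular I may freely replace $W$ by any $\Z_{(2)}$-congruent form, and $q$ is transported along automatically.

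I would then decompose $W$ over $\Z_{(2)}$ as an orthogonal direct sum of $2\times 2$ blocks. Reducing mod $2$, $W$ is a nondegenerate alternating form over $\Z/2$, so there is a pair $e,f$ with $W(e,f)$ odd; these span a rank-two $\Z_{(2)}$-summand whose Gram matrix $B=\left(\begin{smallmatrix}2\alpha&\gamma\\\gamma&2\beta\end{smallmatrix}\right)$ has $\gamma=W(e,f)$ odd, hence unit determinant, so $\langle e,f\rangle$ splits off $W$-orthogonally and the complement is again even, symmetric and unimodular. Iterating $g$ times gives $W\cong\bigoplus_{i=1}^g B_i$ with $B_i=\left(\begin{smallmatrix}2\alpha_i&\gamma_i\\\gamma_i&2\beta_i\end{smallmatrix}\right)$, $\gamma_i$ odd. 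On each block $q(e_i)=\alpha_i$, $q(f_i)=\beta_i$, and $e_i,f_i$ is a symplectic pair mod $2$, so $\Arf(K)=\sum_i\alpha_i\beta_i\bmod 2$. Since $\gamma_i^2\equiv 1\pmod 8$, each $\det B_i=4\alpha_i\beta_i-\gamma_i^2\equiv-(1+4\alpha_i\beta_i)\pmod 8$, and multiplying (the degree $\ge 2$ terms carry a factor $16$) yields $\det W\equiv(-1)^g\big(1+4\sum_i\alpha_i\beta_i\big)\equiv(-1)^g(1+4\Arf(K))\pmod 8$. Combined with $\Delta_K(-1)=(-1)^g\det W$ this gives $\Delta_K(-1)\equiv 1+4\Arf(K)\pmod 8$, as required. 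The main obstacle is this orthogonal block decomposition: it rests on the structure theory of symmetric bilinear forms over the local ring $\Z_{(2)}$ (splitting off a unimodular rank-two summand and verifying the complement stays even and unimodular), and one must be careful that the quadratic refinement is carried along correctly at every stage so that the final Arf computation is legitimate.
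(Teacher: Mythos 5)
Your proof is correct. Note that the paper does not prove Proposition \ref{ArfByAlex} at all --- it is quoted as a known result with citations to Levine and Murasugi --- so your argument is necessarily an independent route. Every step checks out: $t^{-g}\det(tV-V^{T})$ is indeed the unique representative satisfying the two normalizations (two representatives differ by $\pm t^{k}$, and symmetry forces $k=0$ while $\Delta_K(1)=1$ fixes the sign), the evaluation $\Delta_K(-1)=(-1)^{g}\det(V+V^{T})$ is right, $q(x)=\tfrac12 W(x,x)\bmod 2$ is a genuine quadratic refinement of the mod-$2$ intersection form whose Arf invariant is $\Arf(K)$ by Robertello's definition, and the splitting of a unimodular rank-two sublattice over the local ring $\Z_{(2)}$ is standard (evenness of the complement is automatic since $W(v,v)\in 2\Z_{(2)}$ for every $v$). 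The classical proofs of Levine and Murasugi likewise compare $\det(V+V^{T})\bmod 8$ with the quadratic form $x\mapsto \lnk(x,x^{+})\bmod 2$, but do so by manipulating the integral Seifert matrix directly; your localization at $2$ is a cleaner packaging, since it makes $W$ unimodular, makes both sides of the congruence manifestly invariant under $\Z_{(2)}$-congruence (unit squares being $\equiv 1 \pmod 8$), and reduces the whole computation to the $2\times 2$ blocks $\bigl(\begin{smallmatrix}2\alpha_i&\gamma_i\\ \gamma_i&2\beta_i\end{smallmatrix}\bigr)$ with $\det\equiv -(1+4\alpha_i\beta_i)\pmod 8$. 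The only cosmetic omissions are the trivial case $g=0$ and the remark that congruence mod $8\Z_{(2)}$ between integers is congruence mod $8$; neither affects correctness.
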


The behavior of Alexander polynomials under satellites is well understood.  

\begin{proposition}[Theorem II of \cite{Seifert50}]\label{Seif}
Let $R_\eta$ be a satellite operator with winding number $w = \lnk(R,\eta)$.  For any knot $K$, The Alexander polynomial of the result of infection is given by $\Delta_{R_\eta(K)}(t)=\Delta_R(t)\Delta_K(t^w)$.
\end{proposition}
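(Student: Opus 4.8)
The plan is to compute the Alexander module $H_1(\widetilde{X})$, where $X = S^3 - N(R_\eta(K))$ is the exterior of the infected knot and $\widetilde{X}$ is its infinite cyclic cover, by decomposing $X$ along the infection torus and running a Mayer--Vietoris argument over $\Lambda = \Z[t,t^{-1}]$. By construction $X = A \cup_T B$, where $B = E(K)$ is the exterior of the companion $K$, $A = (S^3 - N(\eta)) - N(R)$ is the exterior of $R$ in the solid torus complement of $\eta$, and $T = \partial N(\eta)$ is the gluing torus. The first and decisive step is to track the abelianization $\phi\colon \pi_1(X) \onto \Z = \langle t \rangle$, sending a meridian of $R_\eta(K)$ to $t$, on the curves of $T$. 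Because the meridian $m_\eta$ bounds a disk missing $R$ while the longitude $\ell_\eta$ is isotopic to $\eta$ and therefore links $R_\eta(K)$ exactly $w$ times, we get $\phi(m_\eta) = t^0 = 1$ and $\phi(\ell_\eta) = t^{w}$. Under the infection gluing (meridian of $\eta$ to longitude of $K$, longitude of $\eta$ to meridian of $K$) this forces $\phi(m_K) = t^{w}$: the meridian of the companion maps to $t^w$, which is exactly the source of the substitution $t \mapsto t^w$ in the statement.

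Next I would identify the three pieces of $\widetilde X$ as $\Lambda$-modules. Since $\phi(m_K) = t^w$, the restriction of the cover to $B = E(K)$ is the pullback of the standard infinite cyclic cover $E(K)_\infty$ along $t_0 \mapsto t^w$; concretely $H_1$ of this piece is the module induced up from the Alexander module of $K$ along $\Z[t_0^{\pm 1}] \to \Lambda$, $t_0 \mapsto t^w$, and hence has order $\Delta_K(t^w)$. The torus piece has $H_1(\widetilde T) \cong \Lambda/(t^w - 1)$ (generated by lifts of $m_\eta$, on which $t^w$ acts trivially), and similarly $H_0(\widetilde T) \cong H_0(\widetilde B) \cong \Lambda/(t^w - 1)$ while $H_0(\widetilde A) \cong H_0(\widetilde X) \cong \Lambda/(t-1)$. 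For the third piece I would compare $A$ with the exterior $E(R) = A \cup N(\eta)$ of $R$ in $S^3$: a second Mayer--Vietoris sequence (gluing in the lifted solid tori $\widetilde{N(\eta)}$, which have vanishing $H_1$) together with the injectivity of the relevant $H_0$ boundary map yields a short exact sequence $0 \to H_1(\widetilde T) \to H_1(\widetilde A) \to H_1(E(R)_\infty) \to 0$, so that $\operatorname{ord} H_1(\widetilde A) = (t^w - 1)\,\Delta_R(t)$.

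With the pieces in hand I would run the Mayer--Vietoris sequence for $\widetilde X = \widetilde A \cup_{\widetilde T} \widetilde B$. The map $H_0(\widetilde T) \to H_0(\widetilde A) \oplus H_0(\widetilde B)$ is injective, because its component into $H_0(\widetilde B)$ is an isomorphism: the boundary-torus lifts in $\widetilde T$ biject with the components of $\widetilde B$. This forces the connecting homomorphism out of $H_1(\widetilde X)$ to vanish and produces a short exact sequence $0 \to H_1(\widetilde T) \to H_1(\widetilde A) \oplus H_1(\widetilde B) \to H_1(\widetilde X) \to 0$. Multiplicativity of orders in a short exact sequence of torsion $\Lambda$-modules then gives $\operatorname{ord} H_1(\widetilde A)\cdot \Delta_K(t^w) = (t^w - 1)\cdot \Delta_{R_\eta(K)}(t)$, and substituting $\operatorname{ord} H_1(\widetilde A) = (t^w - 1)\Delta_R(t)$ cancels the factor $(t^w - 1)$ to yield $\Delta_{R_\eta(K)}(t) = \Delta_R(t)\,\Delta_K(t^w)$, as claimed.

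The main obstacle is the careful bookkeeping forced by the winding number $w$. For $w \neq 0$ the covers of $T$ and $B$ are disconnected, with $|w|$ components cyclically permuted by the deck group, and one must verify that their homology really is the cyclic module $\Lambda/(t^w-1)$ and the induced module claimed above, and that the boundary maps are injective so that the sequences split off as stated (equivalently, that the $H_2$ terms do not contribute). The case $w = 0$ is genuinely degenerate and I would treat it separately: there $\phi(m_K) = 1$, the module $H_1(\widetilde B)$ is free rather than torsion, and the orders above are not directly defined; but in that case $\eta$ is null-homologous in $E(R)$, so the infection is invisible to the first homology of the infinite cyclic cover, and one checks directly that $H_1(\widetilde X) \cong H_1(E(R)_\infty)$, giving $\Delta_{R_\eta(K)}(t) = \Delta_R(t) = \Delta_R(t)\Delta_K(1)$, consistent with the formula.
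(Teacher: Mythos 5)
Your argument is correct, but note that the paper offers no proof of this proposition at all: it is quoted as Theorem II of Seifert's 1950 paper, whose original derivation proceeds through the linking invariants of finite cyclic branched covers rather than through the Alexander module. What you have written is the standard modern proof (the one found in Litherland's and Burde--Zieschang's treatments of satellites): split the exterior along the infection torus, observe that the abelianization sends $m_K\mapsto t^w$, and run Mayer--Vietoris over $\Lambda=\Z[t,t^{-1}]$. Your bookkeeping is right: the induced-module description of $H_1(\widetilde B)$ with order $\Delta_K(t^w)$, the identification $H_1(\widetilde T)\cong\Lambda/(t^w-1)$, and the auxiliary sequence giving $\operatorname{ord}H_1(\widetilde A)=(t^w-1)\Delta_R(t)$ are all correct for $w\neq 0$. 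The two points you flag as needing verification do go through: injectivity of $H_1(\widetilde T)\to H_1(\widetilde A)\oplus H_1(\widetilde B)$ follows already from injectivity into the first summand, which you obtain from $H_2(E(R)_\infty)=0$ in the auxiliary sequence, and multiplicativity of orders in short exact sequences of finitely generated torsion modules holds over $\Z[t,t^{-1}]$ even though it is not a PID. Your separate treatment of $w=0$ is also the right move, since there the torsion hypothesis fails; in the paper's application only $w=\pm1,\,2,\,0$ ever occur, so both branches of your argument are actually used. The one cosmetic remark is that your conclusion is an equality up to units $\pm t^k$, whereas the paper's Proposition 2.2 fixes the normalization $\Delta(1)=1$, $\Delta(t)=\Delta(t^{-1})$; the product formula is compatible with that normalization, but it is worth saying so since the Arf computation in Corollary 2.4 depends on it.
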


We can use this to compute the Arf invariant of the result of infection.

\begin{corollary}\label{ArfCor}
For $R_{\eta_1,\dots,\eta_n}$  with winding $(w_1,\dots, w_n)$ and knots $K_1,\dots, K_n$, the Arf invariant of
 the result of infection 
is given by 
\begin{equation}\label{ArfCor goal}
\Arf(R_{\eta_1,\dots,\eta_n}(K_1,\dots,K_n))\equiv \Arf(R)+w_1\Arf(K_1)+\dots +w_n\Arf(K_n)
\end{equation}
\end{corollary}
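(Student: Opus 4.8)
The plan is to reduce the general $n$-variable statement to repeated applications of Proposition~\ref{Seif} and then to perform a short computation modulo $8$ via Proposition~\ref{ArfByAlex}.

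First I would establish the multivariable analogue of Seifert's formula,
\[
\Delta_{R_{\eta_1,\dots,\eta_n}(K_1,\dots,K_n)}(t) = \Delta_R(t)\prod_{i=1}^n \Delta_{K_i}(t^{w_i}).
\]
Since the tubular neighborhoods $N(\eta_1),\dots,N(\eta_n)$ are pairwise disjoint, the infection may be performed one curve at a time: infecting $R$ along $\eta_1$ by $K_1$ produces a new pattern $R'=R_{\eta_1}(K_1)$ which still contains $\eta_2,\dots,\eta_n$ as infection data in its complement. The point to verify is that this modification, being supported in a neighborhood of $\eta_1$ disjoint from the remaining $\eta_j$, preserves the winding numbers, so that $\lnk(R',\eta_j)=\lnk(R,\eta_j)=w_j$ for $j\geq 2$. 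Granting this, the product formula follows by induction on $n$ from the one-variable case. Note also that the right-hand side is symmetric under $t\mapsto t^{-1}$ and evaluates to $\Delta_R(1)\prod_i\Delta_{K_i}(1)=1$ at $t=1$, so it is exactly the normalized representative to which Proposition~\ref{ArfByAlex} applies.

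Next I would evaluate at $t=-1$. Because $\Delta_{K_i}(1)=1$, the factor $\Delta_{K_i}((-1)^{w_i})$ equals $1$ when $w_i$ is even and equals $\Delta_{K_i}(-1)$ when $w_i$ is odd; in both cases a parity check gives $\Delta_{K_i}((-1)^{w_i})\equiv 1+4\,w_i\Arf(K_i)\pmod 8$, where $w_i\Arf(K_i)$ is read modulo $2$. Applying Proposition~\ref{ArfByAlex} to $R$ and to the infected knot, the product formula yields, modulo $8$,
\[
1+4\Arf\big(R_{\eta_1,\dots,\eta_n}(K_1,\dots,K_n)\big)\equiv (1+4\Arf(R))\prod_{i=1}^n\big(1+4\,w_i\Arf(K_i)\big).
\]

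Finally, the key algebraic observation is that for $a,b\in\{0,1\}$ one has $(1+4a)(1+4b)=1+4(a+b)+16ab\equiv 1+4(a+b)\pmod 8$, and $1+4(a+b)\equiv 1+4\big((a+b)\bmod 2\big)\pmod 8$ because $8\equiv 0$. Iterating this collapses the product to $1+4\big(\Arf(R)+\sum_i w_i\Arf(K_i)\big)\pmod 8$, and comparing the coefficients of $4$ modulo $2$ gives the claimed congruence. The only genuine content is the geometric first step, namely the invariance of winding numbers under successive infections; the rest is the routine mod-$8$ bookkeeping that converts the multiplicative Alexander-polynomial identity into the additive Arf identity.
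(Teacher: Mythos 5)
Your proof is correct and follows essentially the same route as the paper's: both rest on performing the infection one curve at a time (noting that the winding numbers $\lnk(R',\eta_j)$ are unchanged), Seifert's satellite formula for the Alexander polynomial, and the Levine--Murasugi congruence $\Delta(-1)\equiv 1+4\Arf\pmod 8$. The only difference is organizational --- you carry the induction at the level of the Alexander polynomial product formula and do the mod-$8$ collapse once at the end, whereas the paper inducts directly on the $n=1$ Arf identity --- but this is the same argument in a slightly different packaging.
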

\begin{proof}
All Alexander polynomials are assumed to be normalized to satisfy the conditions of Proposition~\ref{ArfByAlex}.  We give the proof only in the cases $n=1$ and $n=2$, as these are the only cases needed in this paper.  The general result follows from a straightforward induction.

We begin with the case $n=1$.  

  If $w_1$ is even then $\Delta_{K_1}((-1)^{w_1}) = \Delta_{K_1}(1)=1$.  Proposition~\ref{Seif} implies that $\Delta_{R_{\eta_1}(K_1)}(-1)=\Delta_R(-1)$ so that by Proposition~\ref{ArfByAlex}, $\Arf(R_{\eta_1}(K_1)) = \Arf(R)$.  On the other hand when $w_1$ is even, the right hand side of \eref{ArfCor goal} is also equal to $\Arf(R)$ in $\Z_2$.

If $w_1$ is odd, by Proposition~\ref{Seif} and Proposition~\ref{ArfByAlex}, 
\begin{equation*}
\begin{array}{rl}
\Delta_{R_{\eta_1}(K_1)}(-1) =  \Delta_R(-1)\Delta_{K_1}(-1)&\equiv (1+4\Arf(R))(1+4\Arf(K_1))\\& \equiv 1 + 4(\Arf(R)+\Arf(K_1)) \mod 8
\end{array}
\end{equation*}
On the other hand, also by Proposition~\ref{ArfByAlex}
$$
\Delta_{R_{\eta_1}(K_1)}(-1) \equiv (1+4\Arf(R_{\eta_1}(K_1))) \mod 8
$$
so that $\Arf(R)+\Arf(K_1) = \Arf(R_{\eta_1}(K_1))$, which completes the proof in the case $n=1$.

Consider $R_{\eta_1,\eta_2}$ and knots $K_1$ and $K_2$.  Let $R'$ be the knot $R_{\eta_1}(K_1)$.  Notice that $\eta_2$ can naturally be seen in the complement of $R'$ (See Figure~\ref{fig:InfectOneAtATime}.)  Moreover $\lnk(\eta_2,R')=\lnk(\eta_2,R)$. Thus the results of infection, $R_{\eta_1,\eta_2}(K_1,K_2)$ and $(R_{\eta_1}(K_1))_{\eta_2}(K_2)$, are isotopic. We can now use the result from the case that $n=1$.
  $$
  \begin{array}{rcl}
  \Arf(R_{\eta_1,\eta_2}(K_1,K_2))&=&\Arf(R'_{\eta_2}(K_2))\\
  & =& \Arf(R')+w_2 \Arf(K_2) \\
  & =& \Arf(R_{\eta_1}(K_1))+w_2 \Arf(K_2)\\
  & =& (\Arf(R)+w_1 \Arf(K_1))+w_2 \Arf(K_2) ,
  \end{array}
  $$
 as was claimed.

\end{proof}

\begin{figure}[h!]
\setlength{\unitlength}{1pt}
\begin{picture}(180,100)

\put(0,0){\includegraphics[height=1.2in]{DoublingOperator0.pdf}}
\put(18,60){$\eta_1$}
\put(73,52){$\eta_2$}

\put(100,0){\includegraphics[height=1.2in]{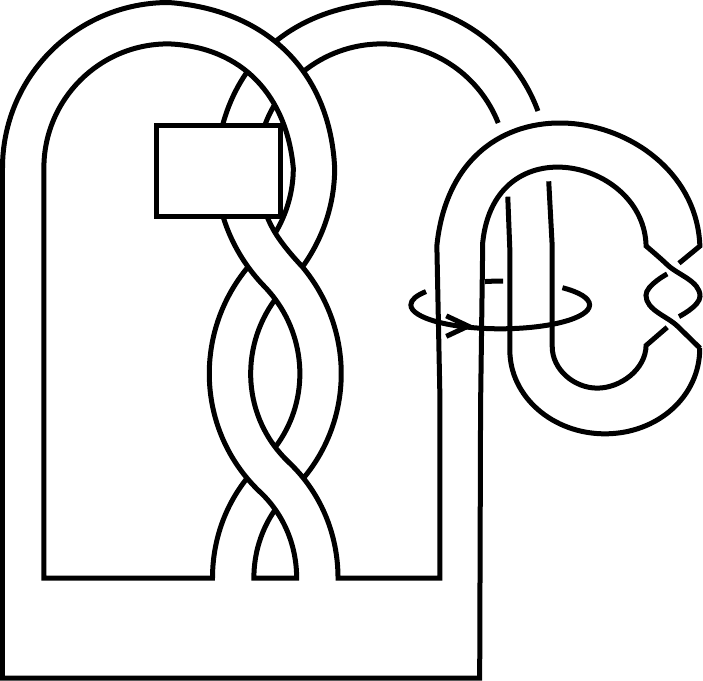}}

\put(173,52){$\eta_2$}
\put(122,62){$K_1$}

\end{picture}
\caption{\emph{Left:} $R_{\eta_1,\eta_2}$.  \emph{Right:} $(R_{\eta_1}(K_1))_{\eta_2}$}\label{fig:InfectOneAtATime}
\end{figure}

 \section{The basic idea: a new method of creating slice knots}

In this section we discuss the main tool of the paper, which is a new method of creating slice knots, and we explain the idea behind why this process can lead to counterexamples to Kauffman's conjectures. 

Suppose $R$ is a knot with a genus one Seifert surface $F$ on which one of the two surgery curves, $d$, is a slice knot. Thus $R$  itself is a slice knot. We first describe a simple procedure to alter the triple $(R,F,d)$ to $(R',F',d')$ where $R'$ is a slice knot with genus one Seifert surface $F'$ on which the concordance type of the surgery curve $d'$ can be assured to be non-zero.  Suppose $\{\eta_1,\eta_2\}$ is a link in the exterior of $F$ with the following properties:
\begin{itemize} 
\item [1.] $\eta_1$ and $\eta_2$ cobound an embedded oriented annulus $A$ that misses $R$ (so $\eta_1$ is isotopic to $\eta_2$ in $S^3-R$);
\item [2.]  $A$ has non-zero algebraic intersection number with $d$;
\item [3.]  $\{\eta_1,\eta_2\}$ is a trivial link in $S^3$.
\end{itemize}
A local picture of the generic situation is shown on the left-most side of Figure~\ref{fig:easyannulus}, where the intersection number is one.

\begin{figure}[h!]
\setlength{\unitlength}{1pt}
\begin{picture}(250,110)
\put(-80,-50){\includegraphics[width=2in]{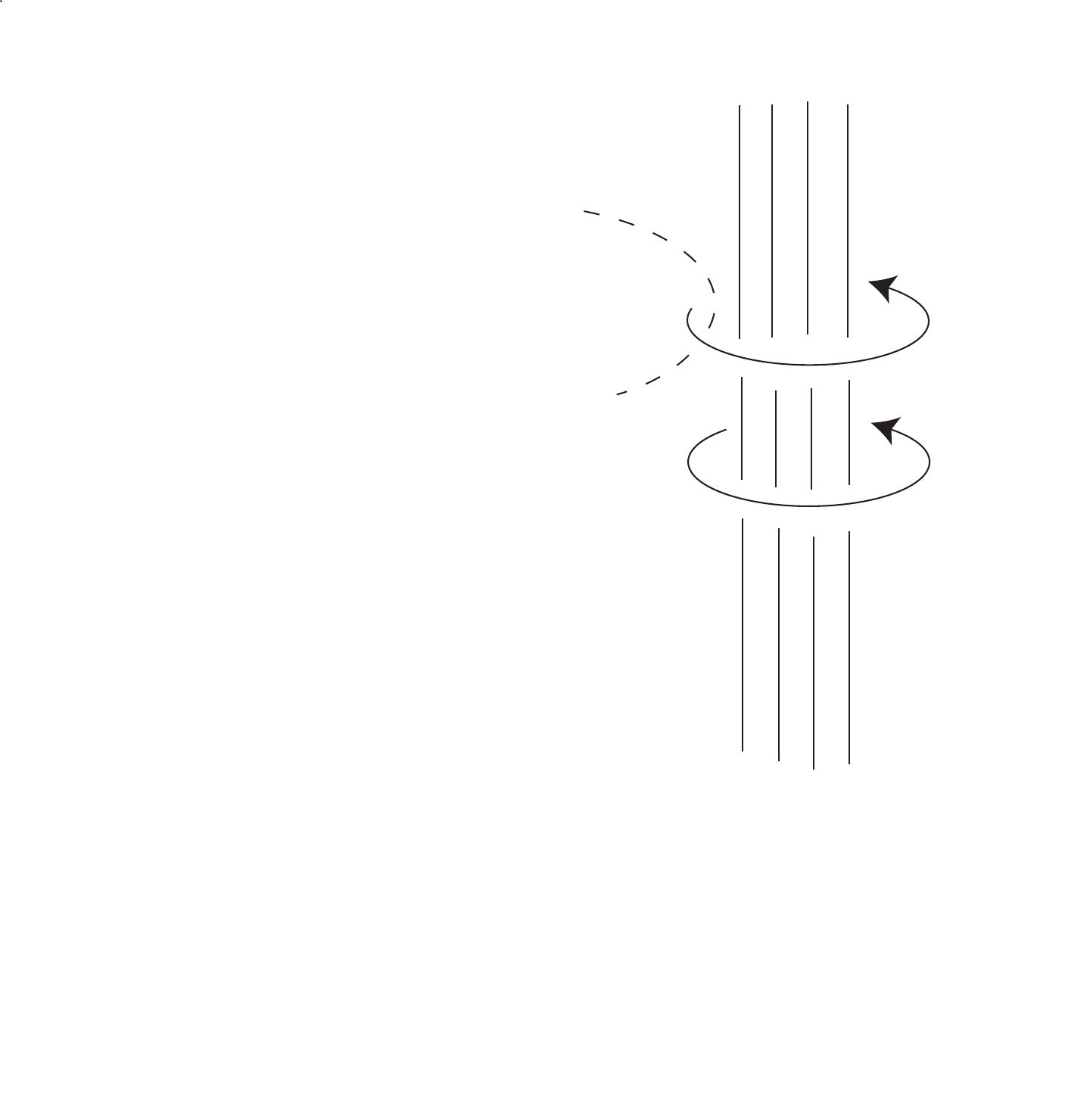}}
\put(0,61){$d$}
\put(40,10){$R$}
\put(52,60){$\eta_1$}
\put(52,35){$\eta_2$}
\put(70,-8){\includegraphics[height=1.44in]{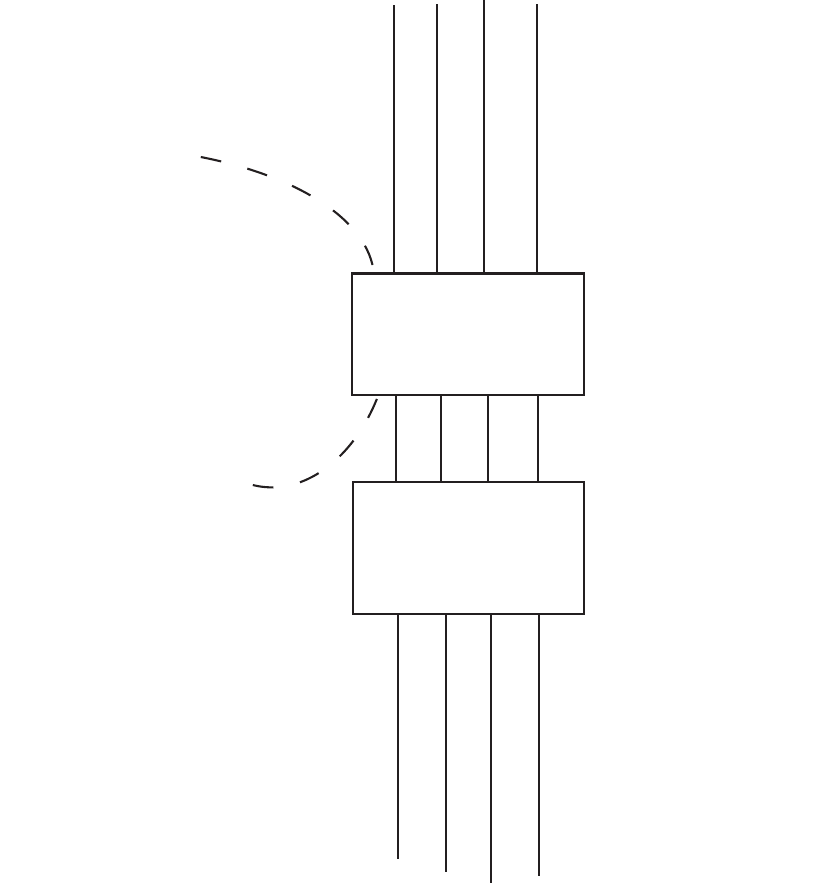}}
\put(120,53){$K$}
\put(100,60){$d'$}
\put(115,28){$-K$}
\put(140,10){$R'$}
\put(140,-10){\includegraphics[height=1.44in]{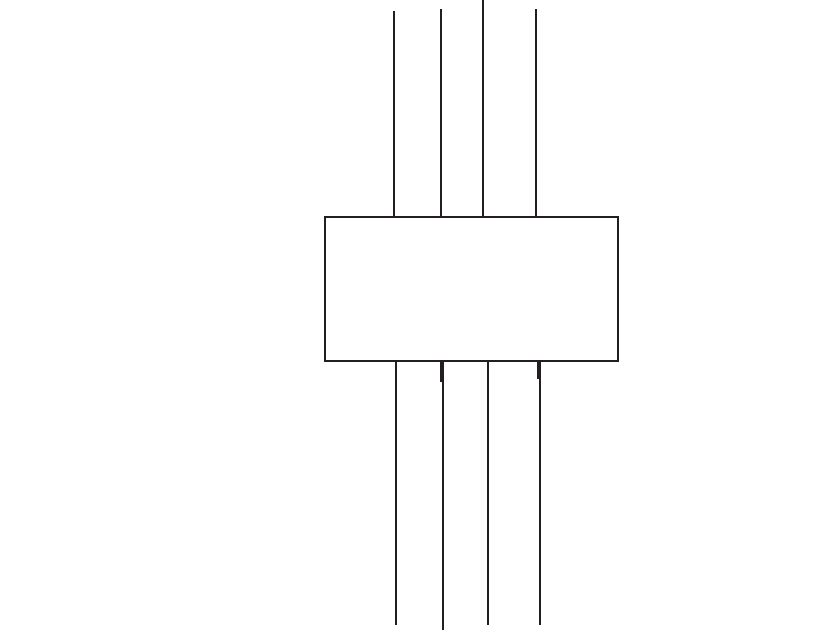}}
\put(198,44){$K\# -K$}
\put(235,10){$R'$}

\end{picture}
\caption{}\label{fig:easyannulus}
\end{figure}

Links satisfying $1$ and $2$ are easy to construct: choose an embedded curve $\eta$ on $F$ that intersects $d$ algebraically non-zero,  let $A\cong \eta\times [-1,1]$ be a small annulus transverse to $F$ and let $\{\eta_1,-\eta_2\}=\partial A$. By construction, this link satisfies $1$ and $2$. If $\eta$ has trivial knot type and $\lnk(\eta,\eta^+)=0$ then it also satisfies $3$.

Returning to the general situation, because of condition $3$, for an arbitrary knot $K$, the knot $R'=R_{\eta_1,\eta_2}(K, -K)$  is defined. Because of condition $1$, the result of the link infection is the same as the result of a single infection on $\eta_1$ by the knot $K\# -K$ as shown in the middle and right-most side of Figure~\ref{fig:easyannulus}. Since $R'$ is a satellite knot of the form $R_{\eta_1}(K\# -K)$ where both $R$ and $K\# -K$ are slice knots ~\cite[Lemma 12.1.2]{Ka3}, it is well-known that $R'$ is a slice knot.  Moreover since $\{\eta_1,\eta_2\}$ misses $F$, the latter survives in $S^3-R$ and is a genus one Seifert surface, $F'$, for $R'$ containing $d'$, the image  of $d$. Moreover, $\lnk(\eta_1,R)=\lnk(\eta_2,R)=0$, so the Seifert form of $R'$ with respect to $F'$ is the same as that of $R$ with respect to $F$. In particular $d'$ is a surgery curve for $R'$. However, by condition $2$, $\lnk(\eta_1,d)\neq \lnk(\eta_2, d)$, so the concordance class of the knot type of $d'$ will inevitably be altered. In the case of Figure~\ref{fig:easyannulus}, the middle figure shows that $d$ is altered to $d'=d\#K$. In general, since $d'$ is the result of infection on $d$ along $\{\eta_1,\eta_2\}$ by $\{K,-K\}$, by formula ~(\ref{ArfCor goal})
$$
\text{Arf}(d')=(w_1-w_2)\text{Arf}(K),
$$
where $w_i=\lnk(\eta_i,d)$.  So if $K$ is a trefoil knot and $w_1-w_2$ is odd then Arf$(d')\neq 0$. There is a similar effect on the signatures of $d$.

Thus we have shown how to alter $(R,F,d)$ to $(R',F',d')$ where $R'$ is a slice knot with genus one Seifert surface $F'$ on which the Arf invariant of the surgery curve $d'$ is non-zero. But recall that there are two possible surgery curves on $F'$. How do we ensure that the second one also has non-zero Arf invariant?  For this it helps to generalize the simple technique above. We do this by showing that we can relax condition $1$ above and allow the annulus to be embedded in the $4$-ball in the exterior of a slice disk for $R$.

The following is the main tool of the paper. 


\begin{theorem}\label{thm:main}
Let $R$ be a slice knot bounding a slice disk $D$ in the 4-ball $B^4$.  Let $(\eta_1,\eta_2)$ be an oriented trivial $2$-component link in $S^3$ that is disjoint from $R$.  Suppose that there is a smooth proper embedding of the annulus $\phi:S^1\times [0,1]\into B^4-D$ with $\phi|_{S^1\times\{0\}}=\eta_1$ and $\phi|_{S^1\times\{1\}}=\eta_2$.  Then, for any knot $K$, the result of infection $R_{\eta_1,\eta_2}(K,-K)$ is smoothly slice.
\end{theorem}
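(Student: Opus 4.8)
The plan is to build a slice disk for $R_{\eta_1,\eta_2}(K,-K)$ directly inside $B^4$, using the embedded annulus $\phi$ to transport the infection data. First I would set up the ambient picture: the slice disk $D$ lives in $B^4$ with $\partial D = R \subset S^3$, and the annulus $A = \phi(S^1\times[0,1])$ is properly embedded in $B^4 - D$ with boundary $\eta_1 \sqcup \eta_2$. The key structural observation is that infection can be performed in the $4$-dimensional setting: rather than thinking of $R_{\eta_1,\eta_2}(K,-K)$ as a knot in $S^3$ obtained by removing neighborhoods of the $\eta_i$ and regluing knot exteriors, I would reinterpret the construction as surgery on the $4$-ball along the annulus $A$ using the knots $K$ and $-K$.

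The central idea I would pursue is a neighborhood/tubing argument. Take a tubular neighborhood $\phi(S^1\times[0,1])\times B^2$ of the annulus in $B^4$ (which is disjoint from $D$), and use it to glue in a copy of a $4$-manifold that encodes the infection by $K$ on one boundary circle and by $-K$ on the other. Concretely, the annulus $A$ interpolates between $\eta_1$ and $\eta_2$, and I want the infection by $K$ along $\eta_1$ to be ``cancelled'' by the infection by $-K$ along $\eta_2$ as one travels across $A$. Because $K \# (-K)$ is a slice knot, one can cap off the cylinder $E(K)\times[0,1]$ fibered over $A$ with a slice disk exterior, producing a region whose boundary contributes exactly the infected knot in $S^3$ while not disturbing $D$. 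The upshot is that $D$, possibly modified in a collar of $A$, extends to a smooth slice disk $D'$ for the infected knot.

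The step I expect to be the main obstacle is verifying that the regluing along the annulus $A$ produces a manifold that is again diffeomorphic to $B^4$ (so that $D'$ genuinely sits in the $4$-ball), and that the boundary knot is precisely $R_{\eta_1,\eta_2}(K,-K)$. This is the $4$-dimensional analogue of the elementary fact, used already in the $n=1$ case of the excerpt, that infecting along $\eta_1$ by $K \# (-K)$ when $\eta_1,\eta_2$ cobound a product annulus reduces to a single satellite operation on a slice knot. The difference here is that $A$ is only embedded in $B^4 - D$, not in $S^3 - R$, so I cannot literally slide the infections together in $S^3$; instead I must carry out the cancellation in the interior of the $4$-ball. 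I would argue that because $\eta_1$ and $\eta_2$ are isotopic \emph{through} $A$ in $B^4 - D$, the two infection surgeries are performed along isotopic framed circles with inverse infecting knots, so their combined effect on the boundary extends across $A \times B^2$ to yield a trivial net alteration of the slice disk complement.

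Finally, I would confirm that the trivial-link hypothesis on $(\eta_1,\eta_2)$ guarantees the infection $R_{\eta_1,\eta_2}(K,-K)$ is well-defined as a knot in $S^3$ (this is the same role condition $3.$ played in the informal discussion), and that the resulting disk $D'$ is smoothly and properly embedded with the correct boundary. The essential leverage throughout is that sliceness of $K \# (-K)$ lets the two tubed-in pieces be filled with slice-disk exteriors, so that no genus or obstruction is introduced. Once the diffeomorphism $B^4 \to B^4$ is established and the boundary identified, smooth sliceness of $R_{\eta_1,\eta_2}(K,-K)$ follows immediately.
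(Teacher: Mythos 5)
Your setup coincides with the paper's: you remove a tubular neighborhood $N(A)\cong [0,1]\times S^1\times D^2$ of the annulus and glue in $[0,1]\times E(K)$, observe that the boundary becomes $S^3$ containing $R_{\eta_1,\eta_2}(K,-K)$, and note that $D$ survives as a slice disk in the resulting $4$-manifold $\mathcal{B}$. You also correctly identify the crux: one must show that $\mathcal{B}$ is diffeomorphic to $B^4$. But at exactly that point your argument stops being a proof. The claim that the two infections are ``performed along isotopic framed circles with inverse infecting knots, so their combined effect \ldots{} yields a trivial net alteration'' is the intuition, not an argument: the isotopy between $\eta_1$ and $\eta_2$ runs through the interior of $B^4$, so there is no ambient $3$-dimensional cancellation available, and nothing you have said pins down the smooth structure of $\mathcal{B}$. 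What comes for free from homology and a van Kampen computation is only that $\mathcal{B}$ is a contractible $4$-manifold with boundary $S^3$, i.e.\ a homotopy $4$-ball; by Freedman this is homeomorphic to $B^4$, which yields topological sliceness but not the smooth sliceness the theorem asserts. (The paper is explicit about this: ``If the smooth $4$-dimensional Poincar\'e conjecture were known, we would be done.'') Your appeal to the sliceness of $K\#(-K)$ to ``cap off $E(K)\times[0,1]$ with a slice disk exterior'' also plays no role in the actual proof and does not obviously produce anything embedded in a \emph{standard} $4$-ball.

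The missing content is the handle-theoretic argument of Section~4. One caps $A$ off with the disjoint disks $D_1,D_2$ bounded by $\eta_1,\eta_2$ (this is where the trivial-link hypothesis is really used) to obtain an embedded $2$-sphere $S$ with $N(S)\cong S^2\times D^2$; the piece glued back in is then $W=[0,1]\times E(K)\cup h_a\cup h_b$, two $2$-handles attached along product-framed meridians of $K$, and a handle-slide argument (Lemma~\ref{trivialize knot}) shows both that $W\cong S^2\times D^2$ and, crucially, that the curve matched with the meridian of $A$ bounds a disk $\Delta'\subset W$ with $W-N(\Delta')\cong B^4$. Pairing $\Delta'$ with the corresponding meridian disk $\Delta\subset N(S)$ (whose complement is also a $4$-ball) and invoking Cerf's theorem that every diffeomorphism of $\partial B^4$ extends over $B^4$ is what finally extends the boundary identification to a diffeomorphism $N(S)\to W$, hence $\mathcal{B}\cong B^4$. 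Without some substitute for this step your proposal establishes at most topological sliceness.
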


The idea behind Theorem~\ref{thm:main} is simple. If we remove from $B^4$ the neighborhood of an annulus, $N(A)\cong A\times D^2$, and replace it with something with the same homology, namely $E(K)\times [0,1]$, then the result, $\B$, will be a homology $B^4$. It is also easy to check that $\B$ is simply-connected, hence is homeomorphic to $B^4$. Since the disk $D$ was disjoint from $A$, it is a smooth slice disk in $\B$ for the resulting knot on the boundary, which can be seen to be $R_{\eta_1,\eta_2}(K,-K)$. If the smooth $4$-dimensional Poincar\'{e} conjecture were known, we would be done. Lacking that, it takes more work to see that $\B$ is \textit{diffeomorphic} to $B^4$. This is done in the next section by studying a handle structure for $\B$. Both the casual reader and the true expert might want to skip the next section.

\begin{remark} The very elementary idea outlined in the first few paragraphs of this section is sufficient to get counterexamples to what probably \textit{should} have been Kauffman's conjecture. Specifically, recall that if $\Delta$ is a slice disk for a knot $R'$ (with $\Delta_{R'}(t)\neq 1$), and $F'$ is a genus one Seifert surface for $R'$, then there is a surgery curve $d'$, \textit{associated to $\Delta$}, in the sense that $d'$ generates the kernel of the map on rational Alexander modules $\mathcal{A}(S^3-K)\to \mathcal{A}(B^4-\Delta)$, as a $\Q$ vector space. Thus the conjecture probably ought to have been that $d'$, \textit{the surgery curve associated to the slice disk}, is itself a slice knot, or has vanishing Arf invariant. The easy procedure outlined above (where $\eta_1$ and $\eta_2$ arise as push-offs of a single curve on $F$) produces counterexamples to this conjecture. Getting counterexamples to Kauffman's actual conjectures seems to require Theorem~\ref{thm:main}, and slightly modified choices of $\eta_1$ and $\eta_2$.
\end{remark}

\section{Proof of Theorem~\ref{thm:main}}\label{proof}


Let $A$ be the image of $\phi$ and $N(A)$ be a tubular neighborhood of $A$.  Then $N(A)\cap \bdry B^4=N(\eta_1)\cup N(\eta_2)$ is the  union of  disjoint tubular neighborhoods of $\eta_1$ and $\eta_2$.  The map $\phi$ extends to an identification $\overline{\phi}:[0,1]\times S^1\times D^2\cong N(A)$ which restricts to the longitudinal identifications $\{0\}\times S^1\times D^2\cong N(\eta_1)$ and $\{1\}\times S^1\times -D^2\cong N(\eta_2)$. This means that 
$\{0\}\times S^1\times \{1\}$ is identified with longitude of $\eta_1$,
$\{0\}\times \{1\}\times \bdry D^2$ is identified with the meridian of $\eta_1$,
  $\{1\}\times S^1\times \{1\}$ is identified with the 
  longitude of $\eta_2$,
  and $\{1\}\times \{1\}\times \bdry D^2$ is identified with the reverse of the meridian of $\eta_2$.
    Henceforth we assume these identifications.  

Let $E(A)=B^4-N(A)$. Then the boundary of $E(A)$ contains  a copy of $[0,1]\times S^1 \times \partial D^2$. Fix any knot $K$. Note that $(\bdry E(K))\times [0,1]\subseteq \bdry(E(K)\times [0,1])$ is also diffeomorphic to $[0,1]\times S^1\times \partial D^2$, where here again we assume the longitudinal identification $\partial E(K)\cong S^1\times \partial D^2$ where $S^1\times \{1\}$ is a longitude of $K$, $\ell_K$, and $\{1\}\times \partial D^2$ is a meridian of $K$, $\mu_K$.  Let $\B$ be the $4$-manifold obtained by identifying $E(A)$ with $[0,1]\times E(K)$ along these two copies of $[0,1]\times S^1 \times \partial D^2$ via the diffeomorphism, $\psi$, that swaps the second and third factors. Using the above identifications we see that
\begin{itemize}
\item the longitude of $\eta_1$ is identified with the meridian of $K$ in $\{0\}\times E(K)$,
\item the meridian of $\eta_1$ is identified with the longitude of $K$ in $\{0\}\times E(K)$, 
\item the 
longitude of $\eta_2$ is identified with the meridian of $K$ in $\{1\}\times -E(K)$,
\item the
reverse of the
 meridian of $\eta_2$ is identified with the longitude of $K$ in $\{1\}\times -E(K)$.
\end{itemize}
Thus, $\bdry \B$ is given by gluing  $S^3-(N(\eta_1)\cup N(\eta_2))$ to $E(K)$ and $-E(K)$ using the above identifications. Since the longitudes of the $\eta_i$ bound disjoint discs in $S^3-(N(\eta_1)\cup N(\eta_2))$, $\bdry \B$ is diffeomorphic to $S^3$. The knot complement $-E(K)$ is orientation preserving diffeomorphic to $E(-K)$ by a diffeomorphism sending the meridian of $K$ to the meridian of $-K$ and the longitude of $K$ to the reverse of the longitude of $-K$.  By making this substitution it becomes clear that $\bdry \B$ is diffeomorphic to $S^3$ by a diffeomorphism sending $R$ to $R_{\eta_1,\eta_2}(K,-K)$.  Since $D$ was assumed to be disjoint from $A$, it follows that $R_{\eta_1,\eta_2}(K,-K)$ is smoothly slice in $\B$. 

 It remains to show that $\B$ is diffeomorphic to the 4-ball.  
 
 \begin{proposition}
$\B$ is diffeomorphic to the 4-ball.
\end{proposition}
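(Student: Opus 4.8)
The plan is to show that $\B$ is diffeomorphic to $B^4$ by exhibiting an explicit handle decomposition and then simplifying it away to nothing (equivalently, to the single $0$-handle). The starting point is the observation already made in the construction: $\B$ is obtained from $B^4$ by removing $N(A)\cong [0,1]\times S^1\times D^2$ and gluing in $[0,1]\times E(K)$ along $[0,1]\times S^1\times \partial D^2$. Since the annulus $A$ is properly embedded in $B^4$, it has a handle decomposition built on $\eta_1\cup\eta_2\subset S^3=\partial B^4$ using handles of index $1$ and $2$ (one may isotope $\phi$ to be Morse with respect to the radial function on $B^4$); dually, $E(A)=B^4-N(A)$ inherits a handle decomposition. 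The idea is that the effect of replacing $N(A)$ by $[0,1]\times E(K)$ on the handle structure is purely local and controlled, because $E(K)$ is a homology $S^1\times D^2$ and, crucially, admits a handle decomposition relative to its boundary with only one $1$-handle and one $2$-handle beyond the collar (this is the standard fact that a knot exterior is built from a $0$-handle, a single $1$-handle, and handles that cancel homologically; the meridian/longitude framing data is exactly what gets recorded).

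First I would fix a handle decomposition of $\B$ relative to a collar of $\partial \B$, organizing the handles into those coming from the ambient $B^4-N(A)$ part and those coming from the inserted $E(K)\times[0,1]$. The key structural input is that $E(K)$ is a homology circle: $H_*(E(K))\cong H_*(S^1)$, and $\pi_1(E(K))$ is normally generated by the meridian $\mu_K$. Consequently, after gluing, every new generator of $\pi_1$ contributed by the $1$-handles of $E(K)$ is killed in $\pi_1(\B)$ by the relations imposed by the longitudes of the $\eta_i$ bounding disks, exactly as in the boundary computation above where $\partial \B\cong S^3$. I would run the same van Kampen argument one dimension up to conclude $\pi_1(\B)=1$, and a Mayer–Vietoris computation (using that removing $N(A)$ and gluing in a homology $S^1\times D^2$ along a homologically matching piece preserves homology) to conclude $H_*(\B)\cong H_*(B^4)$. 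Together with $\partial\B\cong S^3$ this gives that $\B$ is a homotopy $4$-ball, i.e. a contractible compact smooth $4$-manifold with boundary $S^3$.

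The main obstacle is precisely that being a homotopy $4$-ball is not known to imply being diffeomorphic to $B^4$ — this is the smooth $4$-dimensional Poincaré conjecture, which the authors explicitly flag as unavailable. So the real content must be a hands-on handle cancellation rather than an abstract topological argument. The plan here is to show that the handle decomposition of $\B$ can be arranged to have handles only in a single pair of adjacent indices that cancel geometrically. Concretely, because $E(K)$ contributes (relative to the collar) one $1$-handle and one $2$-handle whose attaching data is determined by a knot, and because the gluing identifies the meridian of $\eta_i$ with the longitude of $K$, the $2$-handle's attaching circle runs geometrically once over the $1$-handle. This means the $1$- and $2$-handles form a cancelling pair, by the Handle Cancellation Lemma, independently of the knot type of $K$ (the knotting only affects how the handle is embedded, not the algebraic cancellation condition). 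Symmetrically, the $1$-handle and $2$-handle that build $N(A)$ back in (or rather, that describe the annulus complement) cancel. After all cancellations one is left with a handle decomposition of $\B$ with a single $0$-handle and no handles of positive index, which is $B^4$.

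What I expect to require the most care is verifying the geometric (not merely algebraic) cancellation condition: one must check that the attaching sphere of each $2$-handle meets the belt sphere of the intended cancelling $1$-handle transversely in exactly one point, so that the Cancellation Lemma applies on the nose without needing to eliminate extra algebraically-cancelling intersections (which in dimension $4$ would reintroduce the Poincaré difficulty). The favorable structure that makes this work is that the gluing is along $[0,1]\times S^1\times \partial D^2$ via the factor-swap $\psi$, so the longitude/meridian correspondence is the identity on the nose; I would track the curves $\{0\}\times S^1\times\{1\}$, $\{0\}\times\{1\}\times\partial D^2$, and their $E(K)$-side images of $\mu_K,\ell_K$ through $\psi$ to confirm the single transverse intersection. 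Once this is pinned down the cancellations are formal, and the proof concludes that $\B\cong B^4$, whence $R_{\eta_1,\eta_2}(K,-K)$ is smoothly slice, completing the proof of Theorem~\ref{thm:main}.
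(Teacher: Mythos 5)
Your first two paragraphs (showing $\B$ is a homotopy $4$-ball) are fine but, as you yourself note, insufficient. The gap is in your third step. You assert that $E(K)$ ``admits a handle decomposition relative to its boundary with only one $1$-handle and one $2$-handle beyond the collar'' and that the resulting $2$-handle of $\B$ runs geometrically once over the $1$-handle \emph{independently of the knot type of $K$}. Neither claim is justified, and the first is false for a general knot: the number of $1$-handles needed to build $E(K)$ rel boundary is governed by the tunnel number of $K$, which is unbounded. What holds for every knot is only the \emph{algebraic} statement that the meridian normally generates $\pi_1(E(K))$; upgrading algebraic to geometric cancellation of $1$-/$2$-handle pairs in a $4$-manifold is precisely the step that is obstructed in dimension four (the Andrews--Curtis-type difficulty), so your plan runs straight back into the problem you set out to avoid. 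Note also that your cancellation step never uses the hypothesis that $(\eta_1,\eta_2)$ is a trivial link, i.e.\ that the $\eta_i$ bound disjoint disks $D_1,D_2$ in $S^3$; any correct proof must use this in an essential way.

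The paper's proof sidesteps handle cancellation entirely. It caps off $A$ with $D_1$ and $D_2$ to form an embedded $2$-sphere $S$ with $N(S)\cong S^2\times D^2$, and enlarges the replacement piece to $W=[0,1]\times E(K)\cup N(D_1)\cup N(D_2)$, which is $[0,1]\times E(K)$ with two $2$-handles attached along meridians of $K$ with the product framing. The key point is $3$-dimensional: attaching a $2$-handle to $E(K)$ along the meridian yields $B^3$, so attaching the first $4$-dimensional $2$-handle yields $[0,1]\times B^3\cong B^4$ on the nose --- no cancellation argument needed --- and the second is then attached along a $0$-framed unknot, giving $W\cong S^2\times D^2$. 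Finally, rather than comparing handle structures, the paper extends the boundary identification $\partial N(S)\to\partial W$ over the interiors by splitting each of $N(S)$ and $W$ along a properly embedded disk ($\Delta$, resp.\ $\Delta'$) whose complement is a $4$-ball, matching the framed boundaries of these disks, and invoking Cerf's theorem that every diffeomorphism of $S^3$ extends over $B^4$. If you want to repair your approach, the product-framing observation $[0,1]\times(E(K)\cup\text{$2$-handle along }\mu_K)=[0,1]\times B^3$ is the ingredient your cancellation step is missing.
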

\begin{proof}

Recall that the curves $\eta_1$ and $\eta_2$ bound disjoint disks $D_1$ and $D_2$ in $S^3$.  Instead of cutting out $N(A)$ and gluing back in $[0,1]\times E(K)$,  we can cut out a regular neighborhood of the 2-sphere $S=A\cup D_1\cup D_2$ and glue back in the union of $[0,1]\times E(K)$ with a neighborhood of $D_1\cup D_2$.  Since $S$ is a smoothly embedded 2-sphere, its regular neighborhood, $N(S)$, is a copy of $S^2\times D^2$.  Notice that since $S$ is not properly embedded, intersecting $\bdry B^4$ in $D_1\cup D_2$, $N(S)\cap \bdry B^4$ consists of two 3-balls.

Next, consider $[0,1]\times E(K) \cup N(D_1)\cup N(D_2)$.  Call this 4-manifold $W$.  It is obtained by adding two 2-handles, $N(D_1)$ and $N(D_2)$, to $[0,1]\times E(K)$. 
They are added along meridians $m_a$ and $m_b$ of $K$ sitting in $\{a\}\times \bdry E(K)$ and $\{b\}\times \bdry E(K)$ for some $0<a<b<1$.  The framing is induced by pushing $m_a$ and $m_b$ in the $[0,1]$-direction.  
  The resulting 4-manifold is diffeomorphic to  $S^2\times D^2$ by the following Lemma.


\begin{lemma}\label{trivialize knot}
Let $K$ be any knot.  Consider numbers $0<a<t<b<1$.  Let $m_a$ and $m_b$  be meridians  of $K$ sitting in $\{a\}\times \bdry E(K)$ and $\{b\}\times \bdry E(K)$ for some $0<a<b<1$.  
Let  $\ell$ be the longitude of $K$ in $\{t\}\times \bdry E(K)$ for some $t\in (0,1)$.  Equip $m_a$, $m_b$ and $\ell$ with the framings induced by pushing  them off themselves in the direction of the $[0,1]$-factor.  Let $W$ be given by adding 2-handles to $[0,1]\times E(K)$ along $m_a$ and $m_b$.  Then 
\begin{enumerate}
\item $W$ is diffeomorphic to $S^2\times D^2$.
\item $\ell$ is (isotopic to) the framed boundary of a disk $\Delta'$ in $W$ such that $W-N(\Delta')$ is diffeomorphic to a 4-ball.
\end{enumerate}
\end{lemma}
\begin{proof}

The proof is assisted by recalling  that $[0,1]\times E(K)$ is diffeomorphic to the complement of a slice disk for $K\#-K$, which is seen by noticing that $E(K)\equiv S^3-N(K)$ is the same as $B^3-N(K')$ where $K'$ is a knotted arc whose closure is the knot type of $K$.

The result of adding to $E(K)$ a 3-dimensional 2-handle along the meridian of $K$ is $B^3$.  Thus, the result of adding to $[0,1]\times E(K)$ a 4-dimensional 2-handle along the meridian, $m_a$, of $K$ (in $\{a\}\times \bdry(E(K))$) with framing induced by the $[0,1]$ direction is $[0,1]\times B^3\cong B^4$.  The cocore of the 2-handle is a slice disk for $K\# -K$ whose removal cancels the handle addition and produces $[0,1]\times E(K)$.  It is an interesting exercise to see that the boundary of this slice disk (the belt sphere) is the knot $K\#-K$, but this result is not necessary to our analysis.

Let $h_a$ and $h_b$ be the handles added along $m_a$ and $m_b$ respectively.  The framed curve $m_b$ is isotopic in $[0,1]\times E(K)$ to $m_a$.  Thus, $[0,1]\times E(K)\cup h_a \cong B^4$ via a diffeomorphism sending $m_b$ to the zero framing of a trivial knot, since after adding $h_a$,  $m_b$ bounds an embedded disk in $\partial B^4$. Thus the 2-handle $h_b$ is then added to the zero framing of a trivial knot in $B^4$, producing $S^2\times D^2$.  This completes the proof of the first claim.

\begin{figure}[h!]
\setlength{\unitlength}{1pt}
\begin{picture}(210,55)

\put(0,5){\includegraphics[width=.96in]{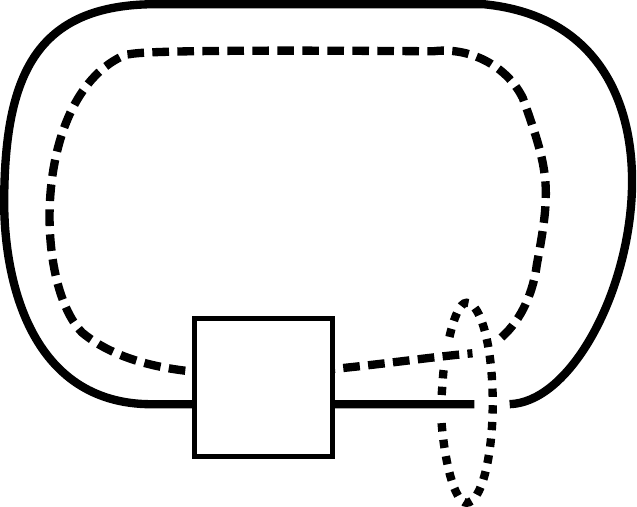}}
\put(24,15){$K$}
\put(50,0){$m'$}
\put(45,44){$\ell'$}

\put(120,5){\includegraphics[width=.96in]{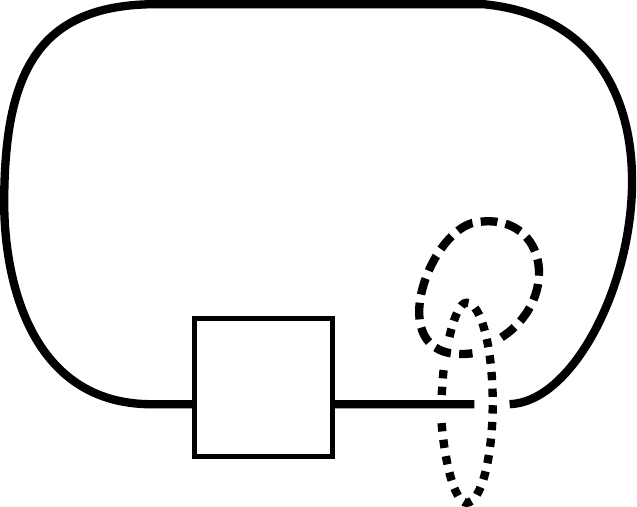}}
\put(144,15){$K$}
\put(170,0){$m'$}
\put(170,38){$\ell'$}
\end{picture}
\caption{Left:  In $\bdry([0,1]\times E(K))$ $\ell$ isotopes to a longitude, $\ell'$ for $K$.  Right:  By sliding $\ell'$ over  $h_1$, $\ell$ is isotoped in $\bdry W$ to the meridian of $\{1\}\times m'$.  }\label{fig:trivializeKnot}
\end{figure}

The triple of framed curves $(m_a,m_b,\ell)$ is isotopic to $(\{0\}\times m', \{1\}\times m', \{1\}\times\ell')$ where $m'$ and $\ell'$ are the zero framings of a meridian and preferred longitude of $K$.  The curves $\ell'$ and $m'$ sitting in $\{1\}\times E(K)$ are depicted in Figure~\ref{fig:trivializeKnot}.  By sliding $\{1\}\times \ell'$ over $h_b$ we see that $\ell$ is isotopic in $\bdry W$ to the meridian of $\{1\}\times m$.  This is further isotopic to the belt sphere of the 2-handle $h_b$.  The belt sphere bounds the cocore $\Delta'$ of $h_b$.  Adding a 2-handle and then removing its cocore undo each other:  
$$
\begin{array}{rcl}
W-N(\Delta')&\cong& [0,1]\times E(K)\cup h_a\cup h_b - N(\text{cocore of }h_b)
\\&\cong&[0,1]\times E(K)\cup h_a
\cong 
 [0,1]\times B^3 \cong B^4.
\end{array}
$$


\end{proof}

Thus, $\B$ is given by cutting out $N(S)\cong S^2\times D^2$ and gluing back in $W\cong S^2\times D^2$, a construction reminiscent of the so-called Gluck twist \cite{GluckTwist}.  Of course, $S$ is not properly embedded and so the gluing does not take place on all of $\bdry (N(S))$.  Rather, it occurs on $\bdry N(S)-\bdry B^4$.  Since $S\cap \bdry (B^4) = D_1\cup D_2$,  $\bdry N(S) \cap \bdry B^4$ consists of 3-dimensional neighborhoods of $D_1$ and $D_2$, and $\bdry N(S)-\bdry B^4$ is given by the complement of two 3-balls, $B_1$ and $B_2$ in $\bdry N(S)$.  

We see that the 4-manifold $\B$ is given by cutting out $N(S)$ and gluing back in $W$ using an embedding $\psi_0:(\bdry N(S)-B_1-B_2)\into \bdry W\cong S^1\times S^2$.  It is easy to show that $\psi_0$ induces an isomorphism $\pi_1(\bdry N(S)-B_1-B_2)\to \pi_1(\bdry W)$.  This implies that the 2-sphere $\psi[\bdry B_i]$  bounds a 3-ball in $\bdry W$.  Using this 3-ball we can extend $\psi_0$ over $B_i$ and so to a diffeomorphism $\psi_1:\bdry N(S)\to \bdry W$.  It remains to determine if $\psi_1$ extends to a diffeomorphism $N(S)\to W$.  

Recall that the original gluing map, $\psi$, identifies the meridian of $\eta_1$ with the longitude of $K$ in $\{0\}\times E(K)$.   It similarly identifies $m(A)$, the meridian of $A$ in the interior of $B^4$, with $\ell$, the longitude of $K$ in $\{t\}\times E(K)$ for some $t$ as in the statement of Lemma~\ref{trivialize knot}.  Equip $m(A)$ with the framing induced  by pushing it off itself in the $[0,1]$ direction in $N(A)$.  (Recall $A\cong S^1\times[0,1]$ so $N(A)\cong S^1\times[0,1]\times B^2$.)  Equip $\ell$ with the framing induced by pushing $\ell$ off itself in the $[0,1]$ direction.  Notice that $\psi$, and so $\psi_1$ respect these framings. 

In $N(S)$ this framed meridian of $A$ bounds the disk $\Delta$, such that, for some $q\in S^2$ $(N(S),\Delta)\cong(S^2\times D^2, q\times D^2)$.  Thus, $N(S)-N(\Delta)\cong (S^2-N(q))\times D^2\cong D^2\times D^2$ is a 4-ball.  According to the second result of Lemma~\ref{trivialize knot}, $\ell$ bounds a disk $\Delta'$ in $W$ so that $W-N(\Delta')$ is also a 4-ball.

Since $\psi_1$ identifies the framed boundary of $\Delta$ with the the framed boundary of $\Delta'$, $\psi_1$ extends to a map $\bdry N(S)\cup N(\Delta)\to \bdry W\cup N(\Delta')$.  Since each of $N(S)-N(\Delta)$ and $W-N(\Delta')$ are 4-balls and any diffeomorphism of $\partial B^4$ extends to a diffeomorphism of $B^4$ ~\cite{Cerf}, $\psi_1$ extends to a diffeomorphism $N(S)\to W$. Thus $\B$ is diffeomorphic to the 4-ball.

\end{proof}

Thus, $R_{\eta_1,\eta_2}(K,-K)$ is slice in $\B$ which is diffeomorphic to the 4-ball.
 This completes the proof of Theorem~\ref{thm:main}.


\section{Slice knots with nonslice surgery curves}\label{example}
In this section we 
find counterexamples to Conjectures~\ref{conj:Kauffman} and \ref{conj:signature} by
 finding slice knots 
which have genus 1 Seifert surfaces for which each surgery curve has non-vanishing Arf invariant, and non-zero signatures, respectively. We find that the doubling operator $R_{\eta_1,\eta_2}$  of Figure~\ref{fig:DoublingOperator} 
\begin{figure}[h!]
\setlength{\unitlength}{1pt}
\begin{picture}(330,165)
\put(0,0){\includegraphics[height=2.32in]{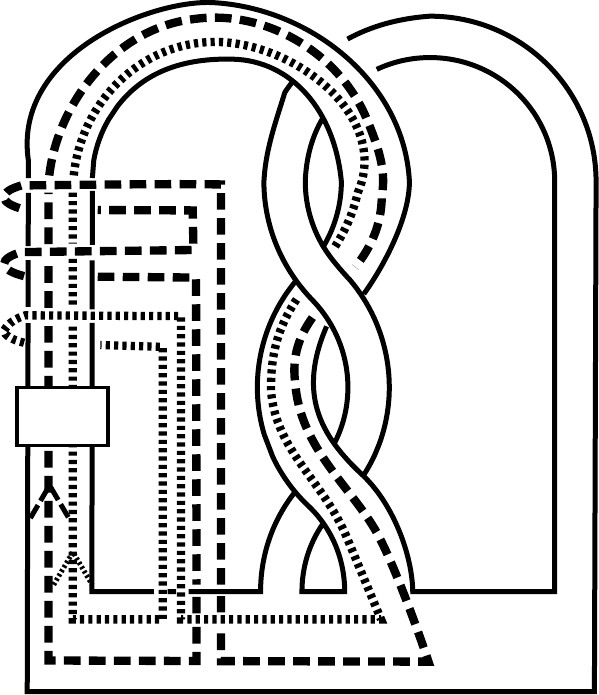}}
\put(110,10){$\eta_1$}
\put(28,70){$\eta_2$}
\put(160,70){$\cong$}
\put(7,65){$+3$}
\put(169,120){$\eta_1$}
\put(221,60){$\eta_2$}
\put(180,0){\includegraphics[height=2.32in]{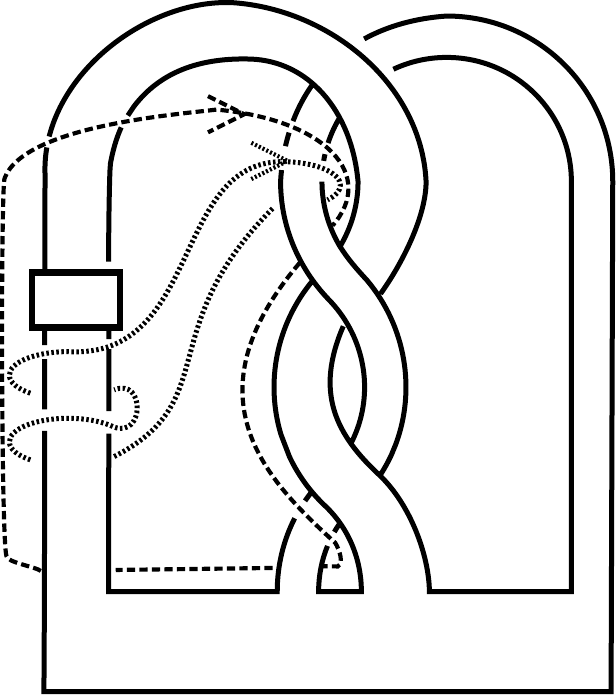}}

\put(192,93){$+3$}

\end{picture}
\caption{The doubling operator operator $R_{\eta_1,\eta_2}$ and an isotopy of it.  Some of the intermediate diagrams are given in Figure~\ref{fig:isotope}}\label{fig:DoublingOperator}
\end{figure}
satisfies the conditions of Theorem~\ref{thm:main}   so that $R_{\eta_1,\eta_2}(K, -K)$ is slice for any knot $K$.  (In the rightmost picture it is clear that $\eta_1$ and $\eta_2$ are unlinked so that  $R_{\eta_1,\eta_2}(K, -K)$ is defined.  The isotopy between the diagrams is depicted in Figure~\ref{fig:isotope}.)

\begin{figure}[h!]
\setlength{\unitlength}{1pt}
\begin{picture}(450,120)
\put(0,0){\includegraphics[height=1.6in]{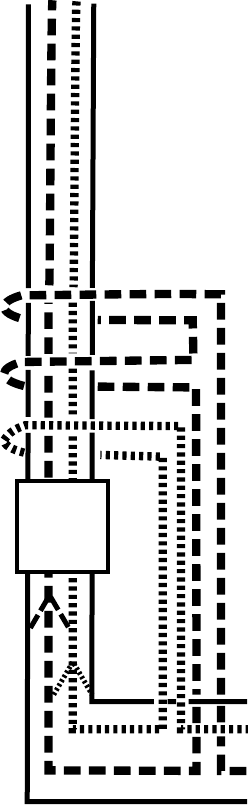}}
\put(6,36){3}

\put(45,0){\includegraphics[height=1.6in]{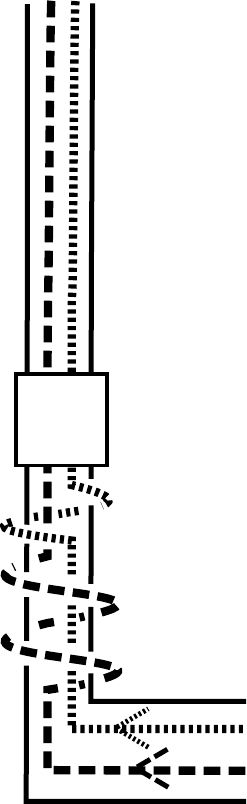}}
\put(51,52){3}

\put(90,0){\includegraphics[height=1.6in]{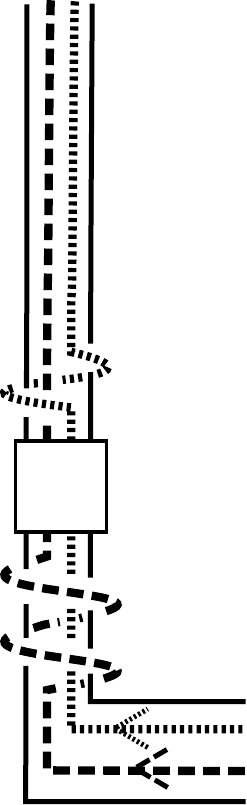}}
\put(96,43){3}

\put(135,0){\includegraphics[height=1.6in]{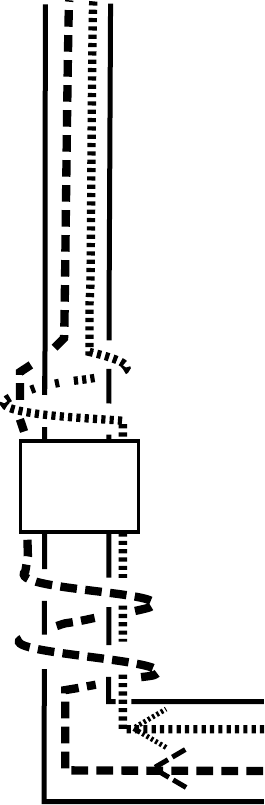}}
\put(142,42){3}

\put(180,0){\includegraphics[height=1.6in]{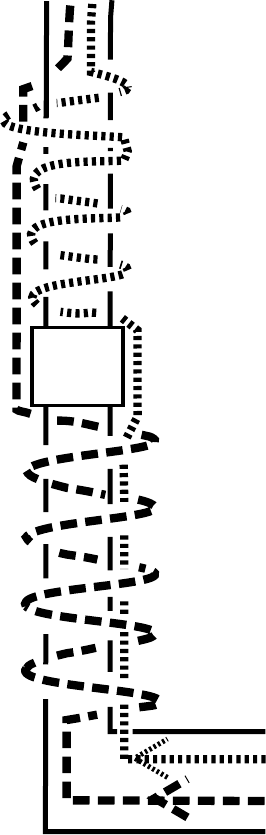}}
\put(188,62){\small3}

\put(225,0){\includegraphics[height=1.6in]{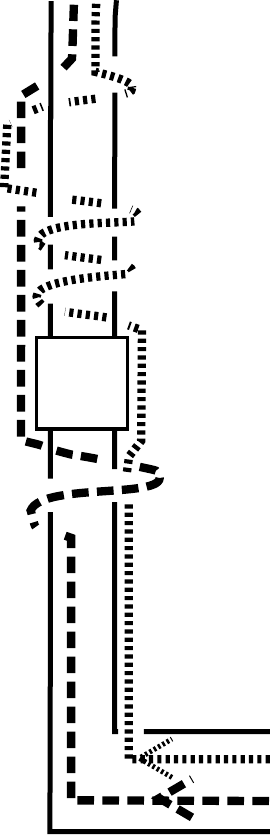}}
\put(233,60){3}

\put(270,0){\includegraphics[height=1.6in]{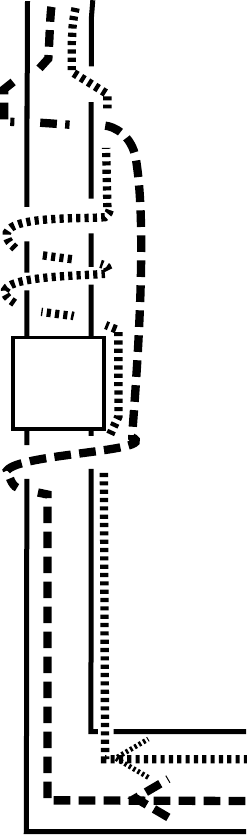}}
\put(275,60){3}

\put(315,0){\includegraphics[height=1.6in]{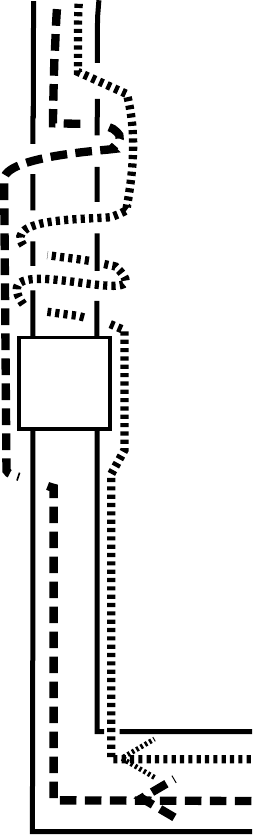}}
\put(321,60){3}
\end{picture}
\caption{Detail of the isotopy of Figure~\ref{fig:DoublingOperator}.}
\label{fig:isotope}
\end{figure}

 We find that if $K$ has nonvanishing Arf invariant, then the surgery curves sitting on the obvious genus 1 Seifert surface for $R_{\eta_1,\eta_2}(K,-K)$ each have nonzero Arf invariant.  We also show that for many choices of $K$ (including the trefoil knot) these surgery curves also have nonvanishing signature function.  These examples prove the first parts of Theorems~\ref{SignatureCounterExample} and ~\ref{counterex}.

\begin{proposition}\label{prop:example}
For the doubling operator $R_{\eta_1, \eta_2}$ of Figure~\ref{fig:DoublingOperator} and any knot $K$, the result of infection $R_{\eta_1,\eta_2}(K,-K)$ is smoothly slice.
\end{proposition}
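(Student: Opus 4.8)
The plan is to verify directly that the doubling operator $R_{\eta_1,\eta_2}$ of Figure~\ref{fig:DoublingOperator} satisfies the three hypotheses of Theorem~\ref{thm:main}, whereupon the conclusion that $R_{\eta_1,\eta_2}(K,-K)$ is smoothly slice follows immediately. First I would establish that $R$ itself is a slice knot and exhibit an explicit slice disk $D\subset B^4$. From the picture, $R$ should be a simple knot (indeed a ribbon knot) whose sliceness is either evident from its diagram or follows because its genus one Seifert surface carries a surgery curve that is unknotted; I would pin down the slice disk $D$ concretely since the rest of the argument must keep the annulus disjoint from it.

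Next I would check that $(\eta_1,\eta_2)$ is an oriented trivial $2$-component link disjoint from $R$. The paper already asserts, via the isotopy detailed in Figure~\ref{fig:isotope}, that in the rightmost diagram of Figure~\ref{fig:DoublingOperator} the curves $\eta_1$ and $\eta_2$ are visibly unlinked; so I would invoke that isotopy to conclude that $\{\eta_1,\eta_2\}$ is a trivial link in $S^3$, and note that the isotopy is supported away from $R$ so disjointness is preserved. This confirms the hypothesis guaranteeing that the infection $R_{\eta_1,\eta_2}(K,-K)$ is well-defined.

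The real content, and the step I expect to be the main obstacle, is producing the properly embedded annulus $\phi:S^1\times[0,1]\into B^4-D$ with $\phi|_{S^1\times\{0\}}=\eta_1$ and $\phi|_{S^1\times\{1\}}=\eta_2$ that avoids the slice disk $D$. In the elementary situation of Figure~\ref{fig:easyannulus} one could take an honest annulus in $S^3\setminus R$ cobounding $\eta_1$ and $-\eta_2$, but here $\eta_1$ and $\eta_2$ need not be isotopic in the complement of $R$ in $S^3$; that is precisely why Theorem~\ref{thm:main} was stated in terms of an annulus in $B^4-D$ rather than in $S^3-R$. So I would look for the annulus pushed into the fourth dimension, built from a homology or band argument: using that $\eta_1$ and $\eta_2$ are homologous in the complement of $D$ and tracking bands in the movie of the isotopy, I would assemble the annulus explicitly, taking care that every sheet stays off $D$.

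With all three hypotheses verified, I would conclude by citing Theorem~\ref{thm:main} to obtain that $R_{\eta_1,\eta_2}(K,-K)$ is smoothly slice for every knot $K$. I anticipate that the bookkeeping in the annulus construction—orientations, framings, and verifying disjointness from $D$ at each stage of the Figure~\ref{fig:isotope} isotopy—will be the delicate part, whereas the sliceness of $R$ and the triviality of the link should be routine reads off the diagrams.
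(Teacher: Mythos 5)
Your overall strategy is the same as the paper's: reduce everything to the hypotheses of Theorem~\ref{thm:main} and recognize that the only nontrivial hypothesis is the properly embedded annulus in $B^4-D$ joining $\eta_1$ to $\eta_2$. You have also correctly diagnosed why the easy construction (an annulus in $S^3-R$) is unavailable here and why the annulus must be pushed into the fourth dimension. That diagnosis is right, and the triviality of $\{\eta_1,\eta_2\}$ does follow from the isotopy of Figure~\ref{fig:isotope}, as you say.

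The gap is that you never actually produce the annulus, and the two routes you gesture at will not, as stated, do the job. A homology argument is insufficient: knowing that $\eta_1$ and $\eta_2$ are homologous in $B^4-D$ only yields \emph{some} embedded oriented surface cobounding them there, with no control on its genus, so you cannot conclude it is an annulus. The ``band argument tracking the movie of the isotopy'' is the right instinct but is left entirely unexecuted, and your framing --- first fix a slice disk $D$ for $R$, then build an annulus avoiding it --- makes the disjointness verification harder than it needs to be. The paper's construction does both at once: add a single band joining $\eta_1$ to the reverse of $\eta_2$ (Figure~\ref{fig:FindAnn1}) and a single ribbon band from $R$ to itself (Figure~\ref{fig:FindAnn2}); the result is a $3$-component unlink $U_1\cup U_2\cup U_3$. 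Reading this as a movie in $B^4$ gives a two-component cobordism consisting of two disjoint pairs of pants, one from $R$ to $U_1\cup U_2$ and one from $\eta_1\cup r(\eta_2)$ to $U_3$; capping the unlink with disjoint disks simultaneously yields the slice disk for $R$ and the annulus for $\eta_1\cup\eta_2$, with disjointness automatic because the cobordism already has two components. Without exhibiting these specific band moves and the resulting unlink, the proof is not complete.
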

\begin{proof}

Notice that the existence of the annulus required by Theorem~\ref{thm:main} is equivalent to the claim that $\eta_1$ and $r(\eta_2)$, the reverse of $\eta_2$ together form the oriented boundary of an embedded annulus disjoint from the slice disk for $R$.  

\noindent By adding a band between $\eta_1$ and $r(\eta_2)$ (Figure~\ref{fig:FindAnn1}) and then another from $R$ to itself (Figure~\ref{fig:FindAnn2}) we build a two component cobordism $B$ between $(R,\eta_1,r(\eta_2))$ and the three component unlink $U=(U_1, U_2, U_3)$.  This cobordism consists of two pairs of pants, one with boundary $R\cup U_1 \cup U_2$ the other $\eta_1\cup r(\eta_2) \cup U_3$.  Since $U$ is the unlink it bounds the disjoint union of three disks.  By gluing these disks to $B$ we see an annulus bounded by $\eta_1\cup \eta_2$ disjoint from a disk bounded by $R$.   Thus, Theorem~\ref{thm:main} applies and for any knot $K$ the result of infection $R_{\eta_1,\eta_2}(K,-K)$ is smoothly slice.

\begin{figure}[h!]
\setlength{\unitlength}{1pt}
\begin{picture}(260,150)
\put(0,0){\includegraphics[width=1.6in]{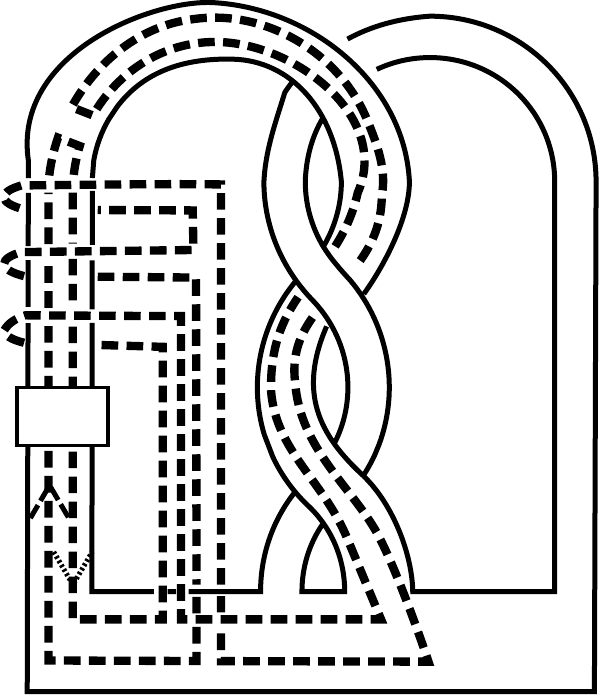}}
\put(4,50){$+3$}
\put(120,50){$\cong$}
\put(135,0){\includegraphics[width=1.6in]{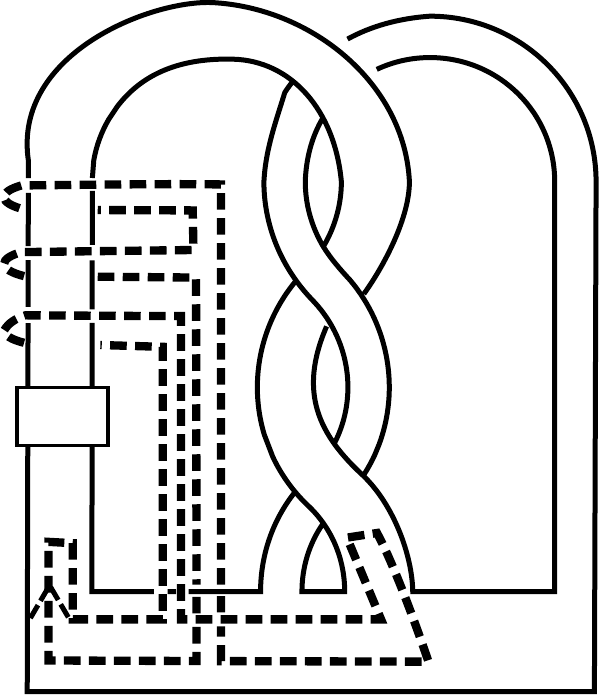}}
\put(139,50){$+3$}
\end{picture}
\caption{ Adding a band between $\eta_1$ and the reverse of $\eta_2$ and an isotopy}
\label{fig:FindAnn1}
\end{figure}

\begin{figure}[h!]
\setlength{\unitlength}{1pt}
\begin{picture}(260,150)
\put(0,0){\includegraphics[width=1.6in]{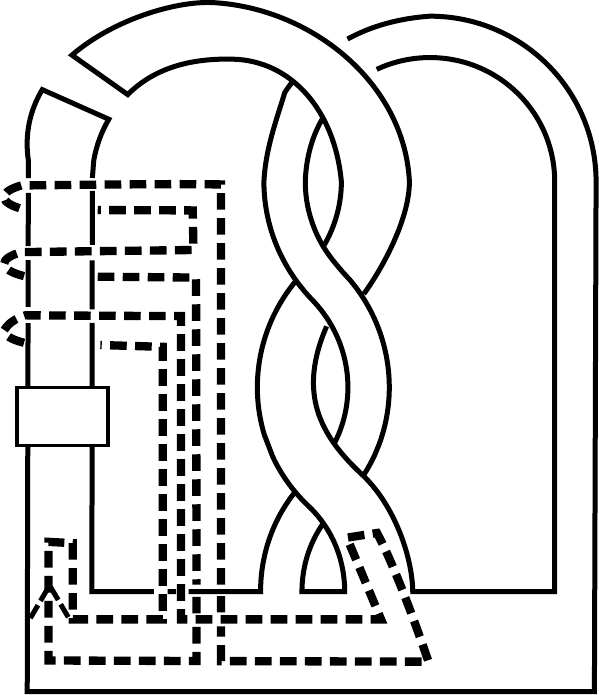}}
\put(4,50){$+3$}
\put(120,50){$\cong$}
\put(135,0){\includegraphics[width=1.6in]{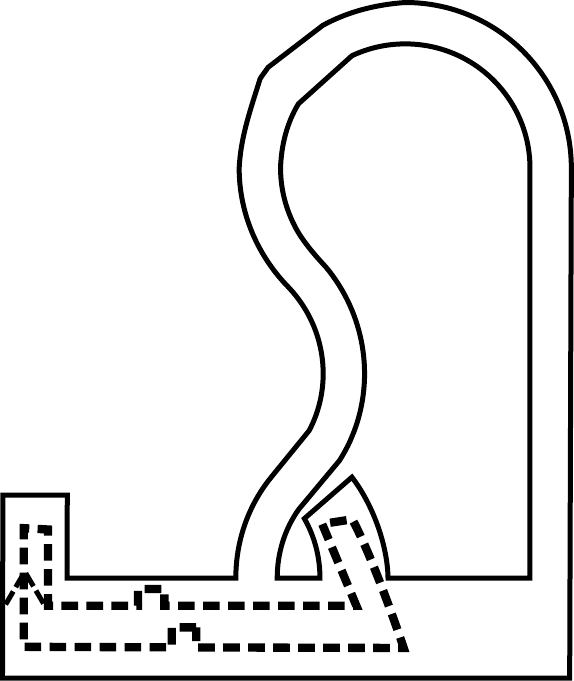}}
\end{picture}
\caption{ Performing a ribbon move for $R$ reveals an unlink}
\label{fig:FindAnn2}
\end{figure}


\end{proof}

For aesthetic reasons, we include one final isotopy of the diagram for $R_{\eta_1,\eta_2}$ on the left side of Figure~\ref{fig:DoublingOperatorBox}.  Since this diagram is not needed for our analysis, we provide no argument for this isotopy.  Then the knot on the right side of  Figure~\ref{fig:DoublingOperatorBox} gives a simple schematic picture of our counterexample to Kauffman's conjecture.

\begin{figure}[h!]
\setlength{\unitlength}{1pt}
\begin{picture}(350,155)
\put(0,0){\includegraphics[height=2.08in]{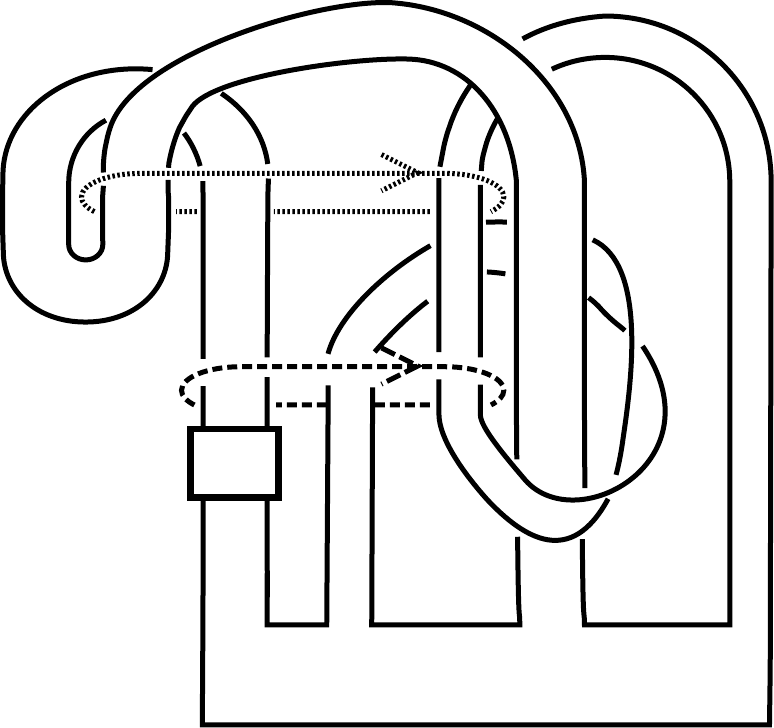}}
\put(64,120){$\eta_2$}
\put(58,79){$\eta_1$}
\put(41,52){$+2$}

\put(190,0){\includegraphics[height=2.08in]{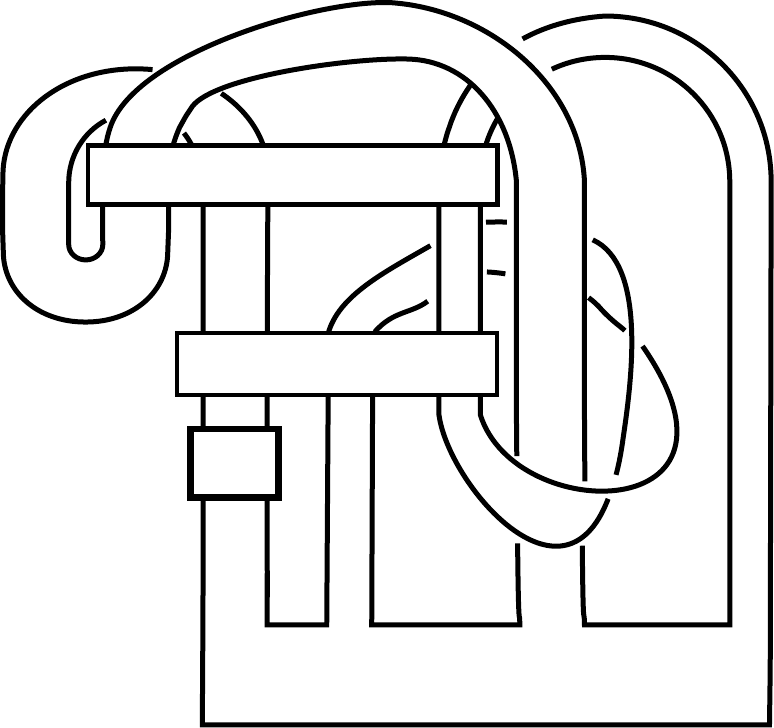}}

\put(240,111){$-K$}
\put(255,72){$K$}
\put(231,52){$+2$}

\end{picture}
\caption{Left:  $R_{\eta_1,\eta_2}$.  Right:  A diagram for $R_{\eta_1,\eta_2}(K,-K)$.  
}\label{fig:DoublingOperatorBox}
\end{figure}

Next, we compute the Arf invariants of the surgery curves on a Seifert surface for the slice knot $R_{\eta_1,\eta_2}(K,-K)$.

\begin{proposition}
For the doubling operator $R_{\eta_1,\eta_2}$ of Figure~\ref{fig:DoublingOperator} and any knot $K$,  $R_{\eta_1,\eta_2}(K, -K)$ bounds a certain genus 1 Seifert surface $F$.  If $\Arf(K)\neq0$ then both surgery curves on $F$ have nontrivial Arf invariant.
\end{proposition}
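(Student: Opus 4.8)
The plan is to reduce the statement to the Arf-additivity formula of Corollary~\ref{ArfCor} together with a reading of Figure~\ref{fig:DoublingOperator}. First I would observe that the obvious genus one Seifert surface $F_R$ for $R$ is disjoint from $(\eta_1,\eta_2)$, so it is carried by the infection homeomorphism to the asserted surface $F$ for $R_{\eta_1,\eta_2}(K,-K)$; because $\lnk(\eta_i,R)=0$, the Seifert form of $R_{\eta_1,\eta_2}(K,-K)$ relative to $F$ agrees with that of $R$ relative to $F_R$. Hence simple closed curves on $F$ correspond bijectively, and with the same self-linking, to simple closed curves on $F_R$, and the surgery curves on $F$ are exactly the images $a',b'$ of the two surgery curves $a,b$ of $F_R$. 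Since $R_{\eta_1,\eta_2}(K,-K)$ is slice by Proposition~\ref{prop:example}, hence algebraically slice with nondegenerate self-linking form, \cite[Proposition 3]{Gi2} guarantees that $a$ and $b$ (equivalently $a',b'$) are the \emph{only} surgery curves up to isotopy and orientation; this is essential, since the claim concerns every surgery curve.

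Next I would identify $a$ and $b$ as the cores of the two bands of $F_R$ and compute their knot types from the figure: their self-linkings vanish as required, but one band, giving $a$, is unknotted while the other, giving $b$, is knotted, so that $\Arf(a)=0$ and $\Arf(b)=1$ (for the example in Figure~\ref{fig:DoublingOperator}, $b$ is a trefoil). Because $a'$ and $b'$ are literally the results of infecting $a$ and $b$ by the pair $(K,-K)$ along $(\eta_1,\eta_2)$, Corollary~\ref{ArfCor} applies verbatim and gives, modulo $2$,
\begin{align*}
\Arf(a') &\equiv \Arf(a) + \bigl(\lnk(a,\eta_1)+\lnk(a,\eta_2)\bigr)\Arf(K),\\
\Arf(b') &\equiv \Arf(b) + \bigl(\lnk(b,\eta_1)+\lnk(b,\eta_2)\bigr)\Arf(K),
\end{align*}
where I have used $\Arf(-K)=\Arf(K)$ and the fact that the signs of the winding numbers are immaterial over $\Z_2$.

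It then remains to read the total linking parities off the diagram. The curves $\eta_1,\eta_2$ are positioned so that $\lnk(a,\eta_1)+\lnk(a,\eta_2)$ is odd while $\lnk(b,\eta_1)+\lnk(b,\eta_2)$ is even; equivalently, $\Arf(a)+\lnk(a,\eta_1)+\lnk(a,\eta_2)$ and $\Arf(b)+\lnk(b,\eta_1)+\lnk(b,\eta_2)$ are both odd. Substituting, and using $\Arf(K)\neq 0$, I would conclude $\Arf(a')\equiv 1$ and $\Arf(b')\equiv 1$, so both surgery curves of $F$ have nonzero Arf invariant.

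The main obstacle is not conceptual but the geometric bookkeeping in Figure~\ref{fig:DoublingOperator}: one must correctly isotope the diagram to read the knot types of $a$ and $b$ and to evaluate the four linking numbers, and then check that the two parities above hold \emph{simultaneously}. This is precisely the delicate feature of the construction: the surgery curve that makes $R$ slice has vanishing Arf invariant and must be linked an odd number of times by the $\eta_i$ so that its Arf invariant gets switched on, while the surgery curve that already has nonzero Arf must be linked an even number of times so that its Arf invariant is preserved; arranging both at once is exactly what the placement of $\eta_1$ and $\eta_2$ accomplishes.
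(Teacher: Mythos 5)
Your proposal is correct and follows essentially the same route as the paper: identify the induced genus one Seifert surface $F$, observe that its surgery curves are exactly the infections $a_{\eta_1,\eta_2}(K,-K)$ and $b_{\eta_1,\eta_2}(K,-K)$ of the two surgery curves of $R$, and apply Corollary~\ref{ArfCor} with the linking numbers read from the figure (the paper records them as $\lnk(a,\eta_1)=2$, $\lnk(a,\eta_2)=1$, $\lnk(b,\eta_1)=-1$, $\lnk(b,\eta_2)=1$, matching your parity claims). Your explicit appeal to \cite[Proposition 3]{Gi2} to confirm these are the only surgery curves is a point the paper makes in its introduction rather than in this proof, but it is the same argument.
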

\begin{proof}
Let $F_0$ be the obvious genus one Seifert surface for $R$ shown in  Figure~\ref{fig:NonDoubleDerivative}. The two surgery curves, $a$ and $b$, on $F_0$ are also shown there. The curves $\eta_1$ and $\eta_2$ are disjoint from $F_0$ as shown in Figure~\ref{fig:DoublingOperator}.    In Figure~\ref{fig:derivatives} these curves are re-drawn together. 

\begin{figure}[h!]
\setlength{\unitlength}{1pt}
\begin{picture}(310,165)
\put(0,0){\includegraphics[width=2in]{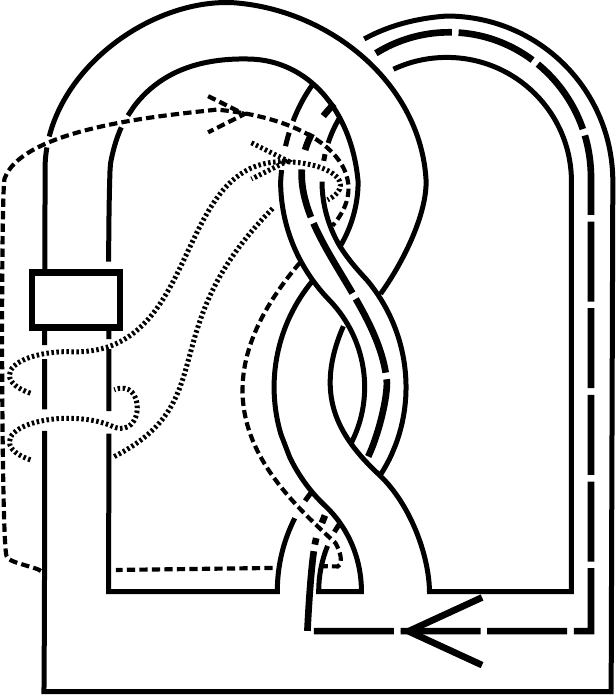}}
\put(160,0){\includegraphics[width=2in]{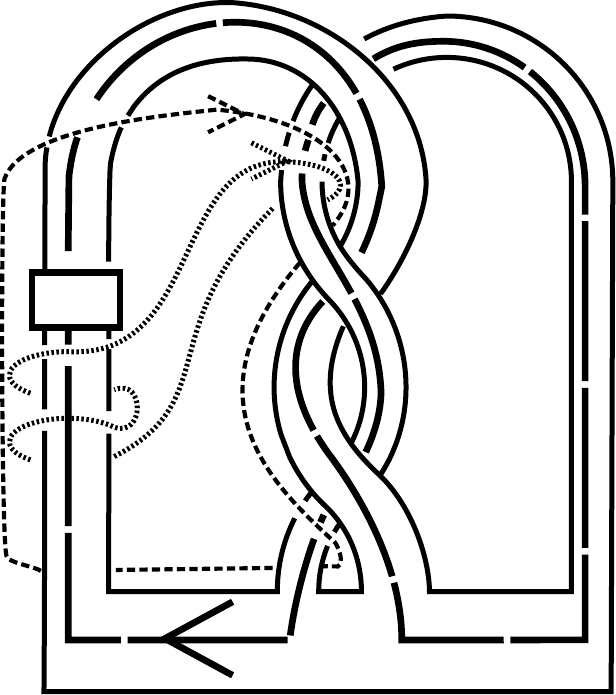}}

\put(38,130){$\eta_1$}
\put(33,75){$\eta_2$}
\put(10,90){$+3$}

\put(90,5){$a$}

\put(198,130){$\eta_1$}
\put(193,75){$\eta_2$}
\put(170,90){$+3$}
\put(250,5){$b$}

\end{picture}
\caption{Surgery curves $\{a,b\}$ on a Seifert surface for $R$ together with the infecting curves
$\{\eta_1,\eta_2\}$}
\label{fig:derivatives}
\end{figure}

Since $\eta_1$ and $\eta_2$ are disjoint from $F_0$, a Seifert surface, $F$, for $R_{\eta_1,\eta_2}(K,-K)$ is given by tying the bands of $F_0$ which pass through the disks bounded by $\eta_1$ and $\eta_2$ into $K$ and $-K$.  Note that this does not introduce any twisting into these bands, so that the Seifert matrix for $R_{\eta_1,\eta_2}(K,-K)$ with respect to the obvious basis of $H_1(F)$ is the same as that for $R$ with respect to $F_0$. Moreover the surgery curves on $F$  have the knot types of $a_{\eta_1,\eta_2}(K,-K)$ and $b_{\eta_1,\eta_2}(K,-K)$.  We can use Corollary~\ref{ArfCor} to compute the Arf invariants of these.  The linking numbers  are given by
\begin{eqnarray}\label{eqn:linkingnos}
\lnk(a,\eta_1)=2,& \lnk(a,\eta_2)=1\\
\lnk(b,\eta_1)=-1,& \lnk(b,\eta_2)=1\nonumber.
\end{eqnarray}
Since $a$ is unknotted and $b$ is the trefoil knot, $\Arf(a)=0$ and $\Arf(b)\neq 0$.   By Corollary~\ref{ArfCor},  
\begin{equation}\label{eq:arf1}
\begin{array}{c}
\Arf(a_{\eta_1,\eta_2}(K,-K)) \equiv \Arf(a)+2\Arf(K)+\Arf(-K) \equiv  \Arf(K)\neq 0,\\ 
\Arf(b_{\eta_1,\eta_2}(K,-K)) \equiv \Arf(b)- \Arf(K)+\Arf(-K) \equiv \Arf(b)\neq 0.
\end{array}
\end{equation}
\end{proof}

Finally, we show that for most of our slice knots $R_{\eta_1,\eta_2}(K,-K)$,  both surgery curves have non-vanishing signature functions. This 
provides a counterexample to the Conjecture~\ref{conj:signature}. Throughout we use the normalized signature function obtained from Levine's signature function by re-defining its value at points of discontinuity to be the average of the values on either side.

As is shown by Litherland in \cite[Theorem 2]{Litherland1979}, this signature invariant behaves very well under satellite operations.  For a knot $K$ and operator $R_\eta$ with $\lnk(R,\eta)=w$ (i.e. a satellite with pattern $R$ and winding number $w$), 
\begin{equation}\label{signatures}
\sigma_{R_\eta(K)}(z) = \sigma_R(z)+\sigma_K(z^w).
\end{equation}
Moreover, $\sigma_{-K}(z) = -\sigma_K(z)$ and $\sigma_K(z^{-1}) = \sigma_K(z)$. We use equations ~\eref{signatures} and ~\eref{eqn:linkingnos} to compute the signature function of the surgery curves of  the Seifert surface for  $R_{\eta_1,\eta_2}(K,-K)$: 
\begin{equation}\label{eq:sig2}
\begin{array}{c}
\sigma_{a_{\eta_1,\eta_2}(K,-K)}(z) =\sigma_{a}(z) + \sigma_K(z^2) - \sigma_K(z) = \sigma_K(z) - \sigma_K(z^2),\\ 
\sigma_{b_{\eta_1,\eta_2}(K,-K)}(z) = \sigma_b(z)+\sigma_K(z) - \sigma_{K}(z^{-1}) = \sigma_b(z)
\end{array}
\end{equation}
Since the knot type of $b$ is the trefoil knot, its signature function is non-zero.  Thus the signature function of $b_{\eta_1,\eta_2}(K,-K)$ is non-zero. Furthermore, if $K$ is any knot for which $\sigma_K(z^2) - \sigma_K(z)$ does not vanish identically (for example the trefoil knot) then the signature function of the surgery curve $a_{\eta_1,\eta_2}(K,-K)$ is not identically zero.  This completes the proof of the first part of  Theorem~\ref{SignatureCounterExample}.



\section{Examples with unique minimal genus Seifert surfaces}\label{sec:uniqueSS}

\begin{theorem}\label{thm:uniquegenusone} There exists a slice knot with a unique genus one Seifert surface, and for this surface both surgery curves have non-trivial Arf invariant and non-trivial signature function. 
\end{theorem}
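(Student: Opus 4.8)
The plan is to take the slice knot $J=R_{\eta_1,\eta_2}(K,-K)$ produced in Section~\ref{example} and to show that, for a suitable companion $K$, its genus one Seifert surface $F$ is the \emph{unique} genus one Seifert surface up to isotopy. By Proposition~\ref{prop:example} and the computations following it, whenever $\Arf(K)\neq 0$ and $\sigma_K(z)\neq\sigma_K(z^2)$ the knot $J$ is smoothly slice, bounds $F$, and both surgery curves $a_{\eta_1,\eta_2}(K,-K)$ and $b_{\eta_1,\eta_2}(K,-K)$ on $F$ have non-trivial Arf invariant and non-trivial signature function. Since the doubling operator of Figure~\ref{fig:DoublingOperator} is already the one tailored for this section, the only remaining point is uniqueness of $F$. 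For the invariant computations the trefoil suffices, but to make uniqueness provable I would instead take $K$ to be a \emph{hyperbolic} knot with $\Arf(K)\neq 0$ and $\sigma_K(z)\neq\sigma_K(z^2)$, which exists in abundance; this retains all the invariant computations verbatim.

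Uniqueness of a minimal genus Seifert surface is naturally attacked with sutured manifold theory. First I would cut the exterior $E(J)=S^3-N(J)$ along $F$ to obtain the complementary sutured manifold $(M,\gamma)$, whose boundary consists of two copies $F_+,F_-$ of $F$ glued along the annular suture. Isotopy classes of minimal genus Seifert surfaces are the vertices of the (connected) Kakimizu complex, so it suffices to show this complex is a single point. A sufficient condition is that $(M,\gamma)$ is taut and contains no \emph{essential product annulus}, that is, no incompressible, boundary-incompressible annulus running from $F_+$ to $F_-$ which is not boundary-parallel; such an annulus is exactly what would produce, by a double-curve-sum, a second minimal genus surface disjoint from $F$. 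Tautness is immediate because $F$ has minimal genus.

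The heart of the argument, and the main obstacle, is ruling out essential product annuli, and here I would exploit the infection structure. Since $F$ is built from the genus one surface $F_0$ for $R$ by tying two of its bands into $K$ and $-K$, the exterior of $F$ decomposes along the two infection tori $\partial N(\eta_1)$ and $\partial N(\eta_2)$ into the cut-open exterior of $F_0$ together with copies of the companion exteriors $E(K)$ and $E(-K)$. Choosing $K$ hyperbolic guarantees that $E(K)$ and $E(-K)$ contain no essential annuli, so these tori form part of the JSJ decomposition of $M$. I would then isotope any essential product annulus $A\subset M$ to meet the JSJ tori minimally; since the companion pieces carry no essential annuli, $A$ must lie in the $F_0$-piece, reducing the entire question to the explicit genus one model of Figure~\ref{fig:derivatives}, with the curves $\eta_1,\eta_2$ recorded as additional boundary tori.

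The remaining and most delicate step is then a direct check that this explicit $F_0$-piece carries no essential product annulus; the danger is an annulus threading through one of the two bands, which could realize a flype of the Seifert surface and yield a non-isotopic one. I expect this final combinatorial-geometric analysis — controlling how an annulus can cross the bands carrying $\eta_1$ and $\eta_2$, and using the precise knotting built into the doubling operator of Figure~\ref{fig:DoublingOperator} — to be where the real work lies, and to be exactly what forces that particular choice of $R$. Once no essential product annulus survives, $F$ is the unique genus one Seifert surface, and combined with the Arf and signature computations already established this proves Theorem~\ref{thm:uniquegenusone}, and hence the ``unique minimal genus'' clauses of Theorems~\ref{counterex} and~\ref{SignatureCounterExample}.
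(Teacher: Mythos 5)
Your framework---cut along $F$, split the complement along the two infection tori, push any essential product annulus out of the companion pieces, and then analyze the remaining genus-one piece---is a genuinely different route from the paper's, and your invariant computations are fine. But the proposal has a real gap exactly where you announce that ``the real work lies'': you never rule out an essential product annulus in the $F_0$-piece, and this is not a routine residual check. Whether such an annulus exists depends delicately on the specific curves $\eta_1,\eta_2$: if either $\eta_i$ could be isotoped into a product region between $F_0$ and a push-off (equivalently, if $[\eta_i]$ lay in the image of one of the two push-off maps $\pi_1(F_0)\to\pi_1(S^3-F_0)$), then a second, non-isotopic genus one surface \emph{would} exist. So your ``direct combinatorial-geometric check'' must secretly contain a computation equivalent to the hardest step of the paper's argument, and you have not supplied it. Moreover, your reduction implicitly requires that $R$ itself has a unique genus one Seifert surface---otherwise the $F_0$-piece already carries a product annulus coming from a second surface for $R$, independent of the $\eta_i$. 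The paper obtains this from knot Floer homology ($R$ has the same knot Floer homology as $9_{46}$, via Hedden and \cite{Hor1}); nothing in your setup supplies it.

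For comparison, the paper argues: by Scharlemann--Thompson \cite{SchTho1988} a second genus one surface may be made disjoint from $F$; a splitting-torus and Euler characteristic argument (Lemma~\ref{lem:avoideta}, essentially your JSJ step) pushes it off the $\eta_i$, so it is the image of a genus one surface $\Sigma$ for $R$; uniqueness for $R$ forces $F_0$ and $\Sigma$ to cobound a product region containing some $\eta_i$; and an explicit $H_1$ computation with the Seifert matrix eliminates $\eta_2$ and one push-off for $\eta_1$, with a free-group word-ending argument (Lemma~\ref{lem:algebra}) killing the last case. That computation is precisely the content your proposal defers, so as written it is an outline, not a proof. Two smaller remarks: your criterion ``taut and no essential product annulus implies the Kakimizu complex is a single vertex'' needs a citation or proof, since the standard results give connectedness and disjoint representatives rather than a single vertex directly; and your replacement of the trefoil by a hyperbolic $K$ is harmless for the invariants and arguably safer for the annulus step, since the no-essential-annuli hypothesis of Corollary~\ref{cor:disjointeta} is delicate for a torus knot exterior, which does contain a cabling annulus.
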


\begin{proof}  

We will show that the knot $R'=R_{\eta_1,\eta_2}(K,-K)$ with $K$ being the $6_2$ knot, and $R, \eta_1, \eta_2$ being as in Figure~\ref{fig:DoublingOperator} is the desired example.  In the previous section we found that $R'$ is a slice knot and that it has a genus one Seifert surface $F$ on which the two surgery curves have Arf invariants and signatures given by equations ~\ref{eq:arf1} and ~\ref{eq:sig2} upon substituting $K=6_2$.  Since, for the knot $6_2$, both the Arf invariant and the classical signature, $\sigma(-1)$, are non-zero, these equations show that both surgery curves have  non-vanishing signature function and non-zero Arf-invariant.

It remains to show that $F$ is the unique minimal genus Seifert surface for $R'$ (up to isotopy in the exterior of $R'$). First we observe that $R$ itself has a unique minimal genus one Seifert surface (a claim that has appeared previously  in ~\cite[p.2213]{Hor1}).   The argument is that, by a result of M. Hedden, $R$ has the same Knot Floer homology as the $9_{46}$ knot, and this homology is small enough so that, by a result of A. Juhasz, any two minimal genus Seifert surfaces for $R$ are isotopic in the exterior of $R$ ~\cite[Theorem 2.3]{Juh08}.

Now  we proceed by contradiction. Note that $R'$ has non-trivial Alexander polynomial and so has genus one. Suppose that $R'$ admits another genus one Seifert surface that is not isotopic to $F$ in the exterior of $R'$.  Then, by ~\cite{SchTho1988} there exists a genus one surface $F'$ \textit{disjoint from $F$} that is not isotopic to $F$ in the exterior of $R'$. Corollary~\ref{cor:disjointeta} below implies that this $F'$ is  the image of some genus one Seifert surface $\Sigma$ for $R$ (disjoint from $\eta_i$ and $F$). We need to apply Corollary~\ref{cor:disjointeta} in the case that $J=6_2$ and $K=-6_2$, so we use the fact that $6_2$ is a hyperbolic knot to ensure that neither knot exterior admits an incompressible non-boundary parallel annulus. 

\begin{corollary}\label{cor:disjointeta}
Let $R$ be a genus 1 knot with genus 1 Seifert surface $F$.
Let $(\eta_1,\eta_2)$ be a trivial link in the complement of $F$ such that neither of $\eta_1$ and $\eta_2$ bounds a disk in the complement of $F$.  Let $K$ and $J$ be non-trivial knots  which  have no incompressible non-boundary parallel annuli in their exteriors.  
If $F'$ is any genus 1 Seifert surface for $R_{\eta_1,\eta_2}(K,J)$, disjoint from $F$, then there is a genus 1 Seifert surface $\Sigma$ for $R$ which is disjoint from $\eta_1$ and $\eta_2$, such that $F'$ is isotopic in the complement of $F$ to the image of $\Sigma$ in $S^3-R_{\eta_1,\eta_2}(K,J)$.  
\end{corollary}

Two applications of Lemma~\ref{lem:avoideta} below give the proof of Corollary~\ref{cor:disjointeta}.

\begin{lemma}\label{lem:avoideta}
Let $R$ be a genus 1 knot with genus 1 Seifert surface $F$.
Let $\eta$ be an unknot in the complement of $F$ such that $\eta$ does not bound a disk in the complement of $F$.  Let $J$ be a non-trivial knot which  has no incompressible non-boundary parallel annuli in its exteriors.  
If $F'$ is any genus 1 Seifert surface for $R_{\eta}(J)$,disjoint from $F$, then there is a genus 1 Seifert surface $\Sigma$ for $R$ which is disjoint from $\eta$, such that $F'$ is isotopic in the complement of $F$ to the image of $\Sigma$ in $S^3-R_{\eta}(J)$. 
\end{lemma}

\begin{proof}[Proof of Lemma~\ref{lem:avoideta}]
Recall that the exterior of $F_\eta(J)$, $E(F_\eta(J))$, decomposes as the union of $E(J)$ and the exterior of the union of $F$ and $\eta$, $E(F,\eta)$.  In order to prove the lemma it suffices to show that $F'$ can be isotoped in $E(F_\eta(J))$ to lie in $E(F,\eta)$.  Let $T$ be the torus along which they are glued.  Since $\eta$ does not bound a disk in $E(F)$ and $J$ is not the unknot, $T$ is incompressible in $E(F_\eta(J))$.

Isotope $F'$ to be transverse to T.  Then $F'$ decomposes as a union $F'=F_1'\cup F_1'$ identified along some simple closed curves where $F_1'=E(R,\eta)\cap F'$ and $F_1' = E(J)\cap F'$.  
Since $T$ is incompressible in $E(R,\eta)$ and the 3-manifolds $E(J)$ and $E(F,\eta)$ have no incompressible 2-spheres, we may isotope $F'$ until neither $F'_1$ nor $F'_2$ has a disk component.

Since $E(J)$ has no non-boundary parallel annuli, we may isotope $F'$ until $F'_2$ has any annular components.  

Thus, every component of $F_1'$ and $F_2'$ has nonpositive Euler characteristic.  Euler characteristic of surfaces adds, so that $\chi(F'_1)+\chi(F_2')=\chi(F')=-1$, and it follows that either 
$\chi(F_1')=-1$ and $\chi(F_2')=0$ or $\chi(F_1')=0$ and $\chi(F_2')=-1$.  In the latter case $F_1'$ is a union of annuli and we see an annular component bounded by $R$ and $\eta$.  But $R$ (a genus 1 knot) and $\eta$ (an unknot) are not isotopic, so this is impossible.

Thus, it must be that $\chi(F_1')=-1$ and $\chi(F_2')=0$.  But $F_2'$ has neither annular nor disk components.  Every component of $F_2'$ has negative Euler characteristic.  Since $\chi(F_2')=0$, $F_2'$ is empty.  

\end{proof}

This completes the proof of Corollary~\ref{cor:disjointeta}.

Since $R$ has a unique genus one Seifert surface, $F$ and $\Sigma$ are isotopic in the exterior of $R$. Then, by ~\cite[Lemma 3.9]{Manja2009}, $F$ and $\Sigma$ cobound a product region $P$. At least one of the $\eta_i$ must lie in $P$ because otherwise $F$ would be isotopic to $\Sigma$ in the exterior of the $\eta_i$ so $F$ would be isotopic to $F'$ in the exterior of $R'$, contradicting our assumption. If $\eta_i$ lies in $P$ then it would lie in the image of whichever of the two inclusion induced maps factors through $P$:
$$
(i_{\pm})_*: \pi_1(F)\to \pi_1(P)\to \pi_1(S^3-F).
$$
By considering the maps on $H_1$ we will show that the only possibility is that 
$\eta_1$
 lies in $P$ and lies in the image of $(i_-)_*$. 

\begin{figure}[htbp]
\setlength{\unitlength}{1pt}
\begin{picture}(250,120)
\put(70,0){\includegraphics[height=1.6in]{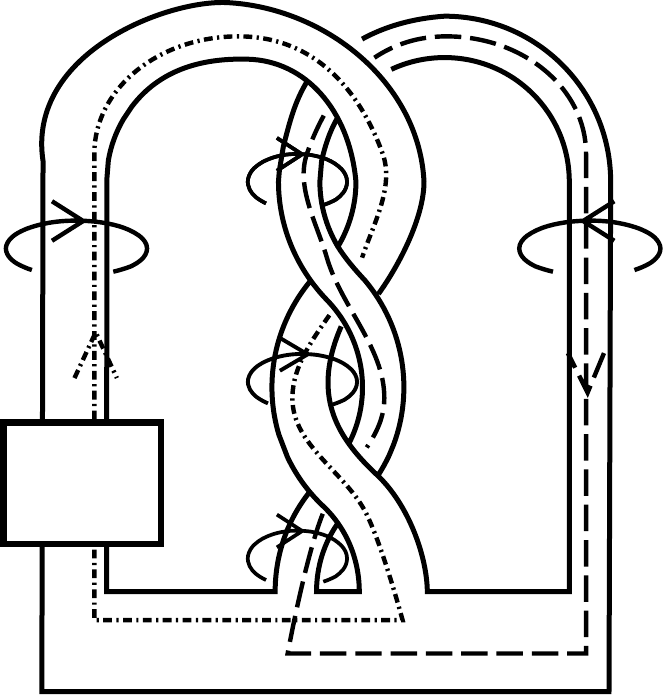}}
\put(75,34){$+3$}
\put(60,74){$\alpha$}
\put(185,73){$\beta$}
\put(105,84){$\delta$}
\put(87,5){$x$}
\put(107,26){$\epsilon$}
\put(100,18){$*$}
\put(180,10){$y$}

\end{picture}
\caption{ Bases for $H_1(F)$ and $H_1(S^3-F)$}\label{fig:Pi1gens}
\end{figure}
Let the $+$-normal direction to $F$ be upwards out of the plane of the paper, $\{x,y\}$ be the basis for $H_1(F)$ and $\{\alpha,\beta\}$ be the basis for $H_1(S^3-F)$ as in the Figure~\ref{fig:Pi1gens}.  With respect to the bases $\{x,y\}$ and $\{\alpha,\beta\}$  the matrices representing the maps 
$$
(i_{\pm})_*: H_1(F)\to H_1(S^3-F)
$$
are given by the Seifert matrix and its transpose respectively:
$$
i_+=V=\left(\begin{array}{cc}
 3 & 2\\
1 & 0\\
\end{array}\right)
i_-=V^T=\left(\begin{array}{cc}
 3 & 1\\
2 & 0\\
\end{array}\right).
$$
We have 
$\eta_1=\alpha+2\beta$ and $\eta_2=2\alpha+\beta$
 in $H_1(S^3-F)$. The reader can check that $\eta_2$ is not in the column space of $V$ nor $V^T$, so $\eta_2$ is not in the image of either $i_{\pm}$ on $H_1$. Hence $\eta_2$ does not lie in  $P$. The reader can also check that $\eta_1$ is not in the column space of $V$, but is in the column space of $V^T$. Thus the only possibility is that $\eta_1$ lies in $P$ and lies in the image of $(i_-)_*$.
The following $\pi_1$-calculation shows that the latter is not true, completing our proof by contradiction.  The basepoint can be considered to be the point at which $x$ intersects $y$ which can be joined by a small arc \textbf{under the surface} to the point labelled by a $*$ in Figure~\ref{fig:Pi1gens}. 

\begin{lemma}\label{lem:algebra} $\eta_1$ does not lie in the image of $(i_-)_*: \pi_1(F)\to \pi_1(S^3-F)$.
\end{lemma}

\begin{proof}[Proof of Lemma~\ref{lem:algebra}]    The group $\pi_1(F)$ is free on $\{x,y\}$ as shown in that Figure but we will use the basis $\{z,y\}$ where $z=xy$. The group $\pi_1(S^3-F)$ is free on the loops $\{\alpha, \beta\}$ but equally well is free on $\{\alpha,\delta\}$. The basepaths for $\alpha$ and $\delta$ go from the $*$ to the pictured loops in the shortest possible manner. Let $\phi=(i_-)_*$. Then we calculate that:
\begin{eqnarray}\label{eqn:pi1map}
\phi(x)=\alpha^3\g\epsilon=\alpha^4\delta ~\abar \delta ,& \phi(y)=\gbar \alpha\g\\
\phi(z)=\alpha^4\g^2 ,& \eta_1=\phi(x)\abar^2\nonumber.
\end{eqnarray}
Thus $\eta_1$ is in the image of $\phi$ if and only if $\alpha^2$ is in the image of $\phi$. We will show that the element $\alpha^2$ is not in the image of the map of free groups $\phi:F<z,y>\to F<\alpha,\g>$ where $\phi(z)=\alpha^4\g^2$ and $\phi(y)=\gbar \alpha\g$.

\begin{lemma}  For any $k\geq 1$ and $n_i\neq 0$, $1\leq i\leq k$,\\
\indent a. Any element $\phi(...y^{n_{k-1}}z^{n_k})$, when expressed in reduced form, ends in either $\gbar \abar^4$ or $\alpha^4\g^2$. \\
\indent  b. Any element $\phi(...z^{n_{k-1}}y^{n_k})$ , when expressed in reduced form, ends in $\alpha^n\g$ for some $n\neq 0$.
\end{lemma}
\begin{proof} The proof is by induction on $k$. When $k=1$ the result is obvious since $\phi(z^{-1})=\gbar^2\abar^4$ and $\phi(y^{-1})=\gbar\abar\g$. Suppose it is true for $k-1$. 

In case a., we analyze $\phi(...y^{n_{k-1}}z^{n_k})=\phi(...y^{n_{k-1}})\phi(z^{n_k})$. By the inductive hypothesis this ends in  the form $\alpha^n\g (\alpha^4\g^2)^{n_k}$. If $n_k>0$, this is reduced and so ends in $\alpha^4\g^2$ as claimed. If $n_k<0$, this ending has the form
$$
\alpha^n\g \gbar^2\abar^4...\gbar^2\abar^4,
$$
which, after a single reduction, ends in $\gbar\abar^4$ as claimed.

In case b, we analyze $\phi(...z^{n_{k-1}}y^{n_k})=\phi(...z^{n_{k-1}})\phi(y^{n_k})$.  By the inductive hypothesis there are two cases. In the first case this ends in $\gbar \abar^4\gbar \alpha^{n_k}\g$, which is reduced and ends in $\alpha^{n_k}\g$. In the second case this ends in $\alpha^4\g^2\gbar \alpha^{n_k}\g$, which, after a single reduction,  ends in $\alpha^{n_k}\g$.

\end{proof}
Since $\alpha^2$ does not end in one of these forms, it is not in the image of $\phi$. This completes the proof of Lemma~\ref{lem:algebra}.

\end{proof}

This completes the proof of Theorem~\ref{thm:uniquegenusone}.

\end{proof}

\section{Relations with the Slice-Ribbon Conjecture}\label{sec:sliceribbon}

Recall that a knot is a \textbf{ribbon knot} if it is the boundary a disk immersed in $S^3$ whose only singularities are double point arcs.  It is easy to see that any ribbon knot is a slice knot.

\begin{conjecture} [Slice-Ribbon Conjecture]\cite[Problem 1.33]{Problems78}  A knot is a slice knot if and only if it is a ribbon knot.
\end{conjecture}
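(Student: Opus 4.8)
The plan is to settle the trivial implication and then isolate why the substantive one is hard. The implication ``ribbon $\Rightarrow$ slice'' has in effect already been recorded above: an immersed ribbon disk in $S^3$ can be pushed into $B^4$, resolving each ribbon singularity by displacing one sheet radially inward, and this produces a smoothly embedded disk bounded by $K$. Thus every ribbon knot is slice, and all of the content lies in the reverse implication, that every smoothly slice knot is ribbon.

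For the hard direction I would use the standard Morse-theoretic reformulation of ribbonness. A knot $K$ is ribbon exactly when it bounds a smoothly embedded disk $D\subset B^4$ on which the radial function $r\colon B^4\to[0,1]$ restricts to a Morse function with critical points of index $0$ and $1$ only; the absence of index-$2$ critical points (interior local maxima of $r|_D$) is precisely the condition that $D$ arises from a collection of radially pushed-in caps fused by bands, i.e.\ from the fusion description of a ribbon knot. The strategy is then to begin with an \emph{arbitrary} slice disk $D$, make $r|_D$ Morse and generic, order the critical points by index, and attempt to remove every index-$2$ critical point by an ambient isotopy of $D$ in $B^4$.

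Concretely, each index-$2$ critical point should be paired with an index-$1$ critical point, and one would try to effect the geometric cancellation by sliding the corresponding portion of $D$ through $B^4$. The algebraic cancellation is automatic: since $D$ is a disk and $H_*(D)\cong H_*(\mathrm{pt})$, the critical points must cancel in pairs. What is genuinely required is to realize this cancellation \emph{geometrically} while keeping $D$ embedded, which amounts to finding embedded Whitney disks in the complementary $4$-dimensional region to guide the isotopies.

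The hard part is exactly this last step, and it is the reason the statement is among the central open problems of the subject. In dimension four the Whitney trick fails, so an algebraically cancelling pair of critical points need not cancel by an ambient isotopy; the obstruction is of the same nature as the smooth $4$-dimensional $h$-cobordism and Poincar\'{e} questions, and more concretely it is visible on the fundamental group of the slice-disk complement, where eliminating the index-$2$ handles corresponds to simplifying a presentation of a group normally generated by a single element, a problem closely related in spirit to the Andrews--Curtis conjecture. I do not expect to overcome this obstacle, and indeed the role of the conjecture here is the opposite one: in Section~\ref{sec:sliceribbon} a \emph{stable} version of Kauffman's Strong Conjecture is shown to be related to it, rather than the conjecture being resolved.
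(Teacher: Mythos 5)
This statement is the Slice--Ribbon Conjecture, a well-known open problem; the paper offers no proof of it and only relates it to a stable form of Kauffman's conjecture in Section~\ref{sec:sliceribbon}. Your write-up correctly establishes the easy direction (ribbon implies slice, by pushing the immersed disk into $B^4$), correctly reformulates the hard direction via the absence of index-$2$ critical points of the radial function on a slice disk, and rightly declines to claim the reverse implication, so your assessment is consistent with the paper's treatment of the statement as a conjecture rather than a theorem.
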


We say that a Seifert surface $F$ \textbf{stabilizes} to another Seifert surface $F'$, if the latter is obtained from the former by a finite number of trivial one-handle attachments ~\cite[Lemma 5.2.4]{Ka3}. We  call a Seifert surface  a \textbf{Levine surface} if it admits a set of surgery curves that forms a slice link. Then we may state the following.

\begin{conjecture} [Stable Kauffman Conjecture]  If $K$ is a slice knot then any Seifert surface for $K$ stabilizes to a Levine surface.
\end{conjecture}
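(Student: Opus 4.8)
The plan is to attack the statement for an arbitrary slice knot $K$ directly, first replacing the quantifier ``any Seifert surface'' by ``some Seifert surface.'' This reduction rests on two observations. First, the Levine property is preserved under stabilization: if $F$ carries a derivative $\{d_1,\dots,d_g\}$ that is a slice link, then a trivial one-handle attachment introduces a hyperbolic pair $(e,f)$ in which $e$ is an unknot bounding a disk in $S^3$ that is disjoint from $F$ and from the slice disks of the $d_i$. One checks $\lnk(e,e^+)=\lnk(e,d_i^+)=\lnk(d_i,e^+)=0$, so $\{d_1,\dots,d_g,e\}$ spans a half-rank summand on which the Seifert form vanishes, and it is again a slice link. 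Second, it is a standard fact that any two Seifert surfaces for a fixed knot admit a common stabilization. Combining these, if even one Seifert surface of $K$ stabilizes to a Levine surface, then every Seifert surface does; so it suffices to produce a single stably-Levine Seifert surface for $K$ from slice data.

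The second step constructs such a surface from a slice disk. I would isotope a slice disk $D\subset B^4$ to be Morse for the radial function, producing critical points of index $0$ (births of trivial circles), index $1$ (fusion bands), and index $2$ (caps). The index-$0$ and index-$1$ data present $K$ as the fusion of an unlink $U=U_0\cup\cdots\cup U_p$ along $p$ bands, and running Levine's ambient-surgery construction in reverse, this fusion data assembles into a Seifert surface $F_0$ whose surgery curves are the belt circles of the bands; each such curve bounds, along its band, one of the disjoint disks that $U$ bounds in $S^3$. Hence the derivative on $F_0$ is an unlink, \emph{a fortiori} a slice link, and $F_0$ is a Levine surface. For a \emph{ribbon} knot the slice disk has no index-$2$ critical points, so this construction goes through with no further input and, together with the first step, proves the statement in full for ribbon knots. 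The one point demanding care here is the precise correspondence between the handle structure of $D$ and the surgery curves of $F_0$, which I would verify by tracking the Morse movie level by level.

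The main obstacle is the index-$2$ critical points of a general slice disk. Each cap contributes a belt circle that must join the surgery curves of the candidate surface; this circle does bound a disk in $B^4$, namely the cap itself, so \emph{every} surgery curve of $F_0$ remains individually a slice knot. What fails is jointness: the cap disks are pieces of $D$ and need not be disjoint from each other or from the $S^3$-disks of the band circles, so the derivative need not be a slice \emph{link}, and Levine's surgery cannot be run. Removing these intersections in $B^4$ is obstructed exactly as Whitney moves are obstructed in dimension four, and eliminating all index-$2$ critical points from a slice disk --- equivalently, isotoping an arbitrary slice disk to a ribbon disk --- is precisely the assertion of the Slice-Ribbon Conjecture. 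I therefore expect the entire difficulty of the unconditional statement to be concentrated in this single gap between ``each surgery curve is slice'' and ``the derivative is a slice link'': the reduction of the first paragraph and the ribbon case of the second are unconditional, so the remaining effort should go toward deciding whether stabilization of $F_0$ supplies enough flexibility to realize the cap belt circles as a slice link. That flexibility appears to be governed exactly by Slice-Ribbon, which is the crux keeping this a conjecture rather than a theorem.
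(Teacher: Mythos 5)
You should be clear at the outset that what you were asked to prove is stated in the paper as an open conjecture: the Stable Kauffman Conjecture has no proof in the paper, and your write-up, to its credit, does not pretend to give one. What you actually establish is precisely the content of Section~\ref{sec:sliceribbon}. Your first paragraph's reduction --- Levine-ness is preserved under trivial one-handle stabilization, and any two Seifert surfaces for a knot have a common stabilization, so it suffices to produce a single Levine surface --- is exactly the remark the paper makes immediately after stating the conjecture. Your second paragraph is the forward direction of Proposition~\ref{prop:sliceribbon}: a ribbon knot admits a Seifert surface whose derivative is a trivial link (an \emph{excellent} surface in the paper's terminology), hence a fortiori a Levine surface. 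The only cosmetic difference is that the paper builds this surface by desingularizing the immersed ribbon disk along its double-point arcs, with the boundaries of the sub-disks containing those arcs as surgery curves, whereas you build it from the index-$0$ and index-$1$ Morse data of the disk as a fusion of an unlink; these are two descriptions of the same construction. Combining your two paragraphs recovers Corollary~\ref{cor:sliceribbonimpliesstableK} (Slice-Ribbon implies Stable Kauffman), which is all the paper proves about this statement, so as a conditional argument your proposal is sound and faithful to the paper.

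Two corrections to your third paragraph are in order. First, your claim that in the presence of index-$2$ critical points the construction still yields a Seifert surface $F_0$ in $S^3$ whose surgery curves are \emph{individually} slice via the caps is not justified: with caps present, the Morse movie does not assemble into an embedded Seifert surface in $S^3$ the way it does in the ribbon case, so there is no $F_0$ about which to make that assertion; this should be flagged as a heuristic, not a verified step. Second, your closing claim that the remaining flexibility is ``governed exactly by Slice-Ribbon'' is an overstatement in one direction. The paper proves Slice-Ribbon \emph{equivalent} to the existence (equivalently, stable existence) of \emph{excellent} surfaces, whose derivatives are trivial links; the Levine condition only requires the derivative to be a slice link, which is a priori strictly weaker, so Stable Kauffman could hold even if Slice-Ribbon fails. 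The paper only asserts the one implication of Corollary~\ref{cor:sliceribbonimpliesstableK}, notes that its own genus-$1$ counterexamples $R_{\eta_1,\eta_2}(K,-K)$ do stabilize to Levine surfaces and are in fact ribbon, and even speculates that Theorem~\ref{thm:main} might produce counterexamples to Slice-Ribbon --- in which case your ``exactly governed'' framing would collapse while Stable Kauffman could survive.
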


\noindent Since it is known that any two Seifert surfaces for a knot have a common stabilization, this is equivalent to: If $K$ is a slice knot then  there \textit{exists some} Levine Seifert surface for $K$. Of course, even if true, this is not very useful in practice.


Call $F$ an \textbf{excellent Seifert surface}  if it admits a set of surgery curves that forms a \textit{trivial} link.

\noindent We show:
\begin{proposition}\label{prop:sliceribbon} The Slice-Ribbon Conjecture is equivalent to each of the following:
\begin {itemize}
\item [1.] If $K$ is a slice knot then there exists an excellent Seifert surface for $K$.
\item [2.] If $K$ is a slice knot then any Seifert surface for $K$ stabilizes to an excellent Seifert surface.
\end{itemize}

\end{proposition}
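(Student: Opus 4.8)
The plan is to route everything through a single geometric equivalence, which I will call the key correspondence: \emph{a knot $K$ is ribbon if and only if it admits an excellent Seifert surface}. Granting this, the proposition follows formally. Since a ribbon knot is always slice, the Slice-Ribbon Conjecture is equivalent to the assertion ``every slice knot is ribbon'', and the key correspondence lets me replace the word ``ribbon'' by ``admits an excellent Seifert surface'' in that assertion; this is exactly the first listed condition. So the substantive work is to establish the key correspondence and then to show that the two listed conditions are equivalent.

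For the equivalence of the two listed conditions, the second trivially implies the first, since a stabilization of a Seifert surface is again a Seifert surface. For the converse I would use the cited fact that any two Seifert surfaces for a knot have a common stabilization, together with the observation that a trivial one-handle attachment preserves excellence: if $F$ carries a set of surgery curves forming a trivial link and $F'$ is obtained from $F$ by a trivial handle attachment, then adjoining the core of the new handle --- a split unknot bounding a disk disjoint from $F$ --- produces a set of surgery curves for $F'$ that is again a trivial link, while the Seifert form of $F'$ splits off a hyperbolic summand so that this enlarged set still spans a metabolizer. Thus, given an arbitrary Seifert surface $G$ and the excellent Seifert surface $F$ furnished by the first condition, their common stabilization $H$ is a stabilization of $F$ and hence excellent, while $G$ stabilizes to $H$; this is the second condition.

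To prove the key correspondence I would take ambient surgery along the surgery curves as the single underlying operation. For ``excellent $\Rightarrow$ ribbon'', let $\{d_1,\dots,d_g\}$ be a set of surgery curves on $F$ forming a trivial link and bounding disjoint embedded disks $\Delta_1,\dots,\Delta_g$ in $S^3$. Ambient surgery on $F$ along the $d_i$ (excising an annular neighborhood of each $d_i$ and gluing in two parallel copies of $\Delta_i$) converts $F$ into an immersed disk $D'$ bounded by $K$, the disk property following because $\{d_i\}$ spans a metabolizer of the Seifert form, so the surgery kills all the genus. The immersions arise only from the intersections $\Delta_i\cap F$; after removing closed curves of intersection by innermost-disk arguments (using irreducibility of $S^3$), the surviving self-intersections are double-point arcs of ribbon type, so $D'$ is a ribbon disk. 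For the reverse direction, starting from a ribbon disk $D$ for $K$ I would \emph{tube} each ribbon singularity: replacing the two sheets near each singular arc by a connecting tube converts the immersed disk $D$ into an embedded Seifert surface $F$ whose genus equals the number of ribbon singularities. The meridional co-cores of these tubes form a set of surgery curves; they are linearly independent, have pairwise and self-linking zero because they bound the disjoint meridian disks of the tubes, and those same disks exhibit them as a trivial link, so $F$ is excellent.

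The main obstacle is the ``excellent $\Rightarrow$ ribbon'' direction, specifically the control of the self-intersections of the surgered disk $D'$: one must argue that after isotopy the intersection pattern $\Delta_i\cap F$ reduces to arcs producing genuine ribbon (double-arc, clasp-free) singularities, rather than the more general double points that would only certify sliceness. This is precisely where the distinction between the Slice-Ribbon and Levine philosophies lives, and it is what forces the derivative to be a \emph{trivial} link in the ribbon setting rather than merely a \emph{slice} link in the slice setting. The remaining points --- that the tube co-cores span a metabolizer and that the handle-attachment argument respects the Seifert form --- are routine bookkeeping.
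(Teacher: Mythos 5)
Your reduction to the key correspondence (ribbon if and only if some Seifert surface is excellent), your desingularization of a ribbon disk into an excellent Seifert surface, and your derivation of condition 2 from condition 1 via common stabilizations all match the paper. The genuine gap is in the direction ``excellent $\Rightarrow$ ribbon'', and you have in effect flagged it yourself without closing it. You perform ambient surgery on $F$ using two parallel copies of the actual disks $\Delta_i$ bounded by the trivial link of surgery curves, and you then need every residual self-intersection of the resulting immersed disk to be a ribbon arc. But $\Delta_i\cap F$ generically contains closed curves, and your proposal to remove them by innermost-disk arguments does not go through in general: an innermost circle on $\Delta_i$ bounds a disk whose interior misses $F$, and if that circle is essential on $F$ --- which is not excluded, since $F$ here is an arbitrary, possibly compressible, Seifert surface --- it cannot be removed by any isotopy. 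Circle components of the double-point set are exactly what separates a merely immersed (hence only slice-certifying) disk from a ribbon disk, so the step you call ``the main obstacle'' is not routine control of intersections; it is the entire content of this implication, and it is not established.

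The paper avoids the problem by never attempting to control the intersections of the given disks with $F$. Instead it cuts $F$ along the trivial link $T=\{d_1,\dots,d_g\}$ to obtain a planar surface exhibiting $K$ as the result of attaching $2g-1$ bands to the trivial $2g$-component link formed by two anti-parallel copies of $T$, i.e.\ as a fusion of an unlink, which is a known description of a ribbon knot: one chooses fresh disjoint embedded disks for the unlink and isotopes the bands --- which retract to $1$-dimensional cores and therefore meet the disks generically in isolated points --- so that every intersection is automatically a ribbon arc. To repair your argument, discard the particular disks $\Delta_i$ after using them only to certify that $T$ is an unlink, and run the fusion/band argument instead.
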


\begin{corollary}\label{cor:sliceribbonimpliesstableK} The Slice-Ribbon Conjecture implies the Stable  Kauffman Conjecture.
\end{corollary}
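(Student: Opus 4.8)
The plan is to deduce this immediately from Proposition~\ref{prop:sliceribbon} together with the elementary observation that a trivial link is a slice link. No new geometric input is required beyond what Proposition~\ref{prop:sliceribbon} provides; the corollary is a purely formal consequence.

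First I would invoke the equivalence asserted in Proposition~\ref{prop:sliceribbon}, specifically its condition 2: assuming the Slice-Ribbon Conjecture, \emph{any} Seifert surface for a slice knot $K$ stabilizes to an \textbf{excellent} Seifert surface, that is, one admitting a set of surgery curves that forms a \emph{trivial} link. Next I would record that every excellent Seifert surface is in particular a Levine surface. Indeed, a trivial link is a slice link: its components bound disjoint embedded disks already in $S^3$, and pushing these disks into $B^4$ produces the disjoint slice disks required in the definition of a slice link. Hence a set of surgery curves forming a trivial link is automatically a set of surgery curves forming a slice link, which is exactly the defining property of a Levine surface.

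Combining these two facts yields the Stable Kauffman Conjecture. If $K$ is a slice knot, then by Proposition~\ref{prop:sliceribbon} (condition 2) the Slice-Ribbon Conjecture guarantees that any Seifert surface for $K$ stabilizes to an excellent Seifert surface; since excellent surfaces are Levine surfaces, any Seifert surface for $K$ therefore stabilizes to a Levine surface. This is precisely the statement of the Stable Kauffman Conjecture, completing the implication.

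I do not expect any genuine obstacle here: all of the substantive content is already packaged into Proposition~\ref{prop:sliceribbon}, and the only verification left is the trivial inclusion ``excellent $\Rightarrow$ Levine,'' which amounts to noting that an unlink bounds disjoint disks. The corollary is thus an immediate formal consequence of the proposition.
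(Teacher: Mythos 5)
Your proof is correct and matches the paper's intended argument: the corollary is an immediate formal consequence of Proposition~\ref{prop:sliceribbon} (condition 2) together with the observation that an excellent Seifert surface is a Levine surface because a trivial link is a slice link. No issues.
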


\begin{proof}  Suppose that the Slice-Ribbon Conjecture holds and that $K$ is a slice knot. Then $K$ is a ribbon knot. Now observe that if $K$ bounds a ribbon disk  $\Delta\looparrowright S^3$  then $K$ admits a natural excellent Seifert surface $F$ obtained by desingularizing the immersed disk $\Delta$ along the set of double arcs in the standard way. Each arc is embedded in the interior of a small sub-disk of $\Delta$. The boundaries of these (disjoint) sub-disks form a set of surgery curves for $F$. Thus $F$ is an excellent Seifert surface. Hence $1.$ follows.

Conversely,  assume $1.$. Then any slice knot $K$ admits some excellent Seifert surface $F$ with associated $g$-component trivial link $T$. Then it is easy to see that $K$ can be obtained from the trivial $2g$-component link formed by taking $2$ anti-parallel copies of $T$ and then  attaching $2g-1$ bands. But the latter is a known description of a ribbon knot. Hence the Slice-Ribbon Conjecture is equivalent to $1.$. 

Clearly $2.$ implies $1.$.  But $1.$ implies $2.$ since, if $F$ is excellent and $F'$ is any another Seifert surface then it is known that $F'$ and $F$ each stabilize to the same (up to isotopy) Seifert surface $F''$. Finally, it is easy to see that any stabilization of an excellent surface is excellent. Thus $F''$ is excellent.  
\end{proof}

In fact, the genus 1 Seifert surface for the slice knot $R_{\eta_1,\eta_2}(K,-K)$ where $(R,\eta_1,\eta_2)$ is as in Figure~\ref{fig:DoublingOperator} \textit{does} stabilize to a Levine surface and the resulting example \textit{is} a ribbon knot. 

Nonetheless, it is possible that  Theorem~\ref{thm:main} could yield candidates for counterexamples to the Slice-Ribbon Conjecture.

\bibliographystyle{plain}
\bibliography{biblio}  

\end{document}